\documentclass[12pt,a4paper]{article}

\newcommand{\bibcommenthead}{}
\usepackage{tensor}

\usepackage{amssymb,amsmath,amsthm,epsfig}
\numberwithin{equation}{section}
\numberwithin{figure}{section}
\numberwithin{table}{section}

\usepackage{mathrsfs}

\usepackage[table,xcdraw]{xcolor}

\usepackage{latexsym, enumerate}
\usepackage{eepic}
\usepackage{epic}
\usepackage{color}
\usepackage{ifpdf}

\usepackage[numbers,sort&compress]{natbib}
\usepackage{natbib}

\usepackage{comment}
\usepackage{titletoc}

\usepackage{dsfont}
\usepackage{multirow}
\usepackage{makecell}
\usepackage{bm}
\usepackage{epstopdf}
\usepackage{graphicx}
\usepackage{subfig}

\usepackage{tikz}

\graphicspath{{Figure/}}

\usepackage{hyperref}
\hypersetup{hypertex=true,
    colorlinks=true,
    linkcolor=blue,
    filecolor=blue,
    urlcolor=blue,
    citecolor=blue,
    bookmarksopen=true,
}
\usepackage{soul}
\usepackage{xcolor}
\usepackage{color}
\sethlcolor{green}
\soulregister\cite7 
\soulregister\citep7 
\soulregister\citet7 
\soulregister\ref7 
\soulregister\pageref7 

\usepackage[toc,page]{appendix}  

\usepackage{booktabs}
\usepackage{caption}

\usepackage{algorithm}
\usepackage{algpseudocode}

\theoremstyle{plain}
\newtheorem{lem}{Lemma}[section]
\newtheorem{thm}[lem]{Theorem}

\newtheorem{prop}[lem]{Proposition}

\theoremstyle{definition}

\theoremstyle{remark}
\newtheorem{rem}{Remark}[section]

\begin{document}
\title{	{\large\bf Multi-parameter determination in the semilinear Helmholtz equation} }

\author{
{Long-Ling Du}\thanks
{School of Mathematics, Hunan University, Changsha 410082, China. Email: 2089759515@qq.com}
\and
{Zejun Sun}\thanks
{Corresponding author. School of Mathematics, Hunan University, Changsha 410082, China. Email: sunzejun@hnu.edu.cn}
\and
{Li-Li Wang}\thanks
{School of Mathematics, Hunan University, Changsha 410082, China. Email: lilywang@hnu.edu.cn}
\and
{Guang-Hui Zheng}\thanks
{School of Mathematics, Hunan University, Changsha 410082, Hunan Province, China.
Email: zhenggh2012@hnu.edu.cn; zhgh1980@163.com}
}
\date{}
\maketitle

\begin{center}{\bf ABSTRACT}
\end{center}\smallskip
This paper studies an inverse boundary value problem for a semilinear Helmholtz equation with Neumann boundary conditions in a bounded domain $\Omega \subset \mathbb{R}^n$ ($n\ge2$). 
The objective is to recover the unknown linear and nonlinear coefficients from the associated Neumann-to-Dirichlet (NtD) map.
Using a higher-order linearization approach, we establish the unique determination of both coefficients from boundary measurements. 
For spatial dimensions $n\ge3$, uniqueness holds under $C^\gamma(\overline{\Omega})$ regularity assumptions with $0<\gamma<1$, while in the two-dimensional case uniqueness is obtained under Sobolev regularity $W^{1,p}(\Omega)$ with $p>2$. 
The analysis relies on the well-posedness of the forward problem together with techniques from linear inverse problems, including Runge-type approximation arguments and Fourier analysis.
In addition, we develop a numerical reconstruction framework for recovering the coefficients from boundary data. 
The forward problem is discretized using a finite difference scheme combined with a quasi-Newton iteration, and the inverse problem is formulated within a Bayesian inference framework. 
Posterior distributions of the coefficients are explored using the preconditioned Crank–Nicolson (pCN) Markov chain Monte Carlo algorithm, which provides both point estimates and uncertainty quantification.
Numerical experiments demonstrate the effectiveness of the proposed reconstruction method and illustrate the theoretical uniqueness results.

\smallskip
{\bf keywords}: Semilinear Helmholtz equation, inverse problem, Neumann-to-Dirichlet map, higher-order linearization, uniqueness, Bayesian inference, numerical reconstruction.

\section{Introduction}\label{sec1}

\textbf{Mathematical setup and main results.}
In this paper, we consider the semilinear Helmholtz equation with Neumann boundary condition: 
\begin{equation}\label{problem}
\begin{cases}
\Delta u + k^2(1+\alpha(x))u + k^2\beta(x)u^3 = 0 & \text{in } \Omega,\\
\dfrac{\partial u}{\partial \nu} = g(x) & \text{on } \partial\Omega,
\end{cases}
\end{equation}
where $\Omega \subset \mathbb{R}^n$ is a connected bounded domain with $C^{\infty}$ boundary, and $\nu$ denotes the unit outward normal vector on $\partial\Omega$. 
The wavenumber $k$ is real, and the coefficients $\alpha(x),\beta(x)\in C^\gamma(\overline{\Omega})$ with $0<\gamma<1$. 
The boundary data satisfies $g\in C^{1,\gamma}(\partial\Omega)$. 
The coefficients $\alpha(x)$ and $\beta(x)$ as well as the boundary data $g(x)$ are assumed to be real-valued, while the solution $u$ is complex-valued. 
We also assume that $0$ is not a Neumann eigenvalue of the operator $\Delta + k^2(1+\alpha(x))$.

Our goal is to determine the unknown coefficients $\alpha(x)$ and $\beta(x)$ from boundary measurements. 
To this end, we employ the higher-order linearization method to establish the unique recovery of these coefficients from the boundary data associated with problem \eqref{problem}, as stated in Theorems \ref{dl1} and \ref{dl2}.

We define the Neumann-to-Dirichlet (NtD) map by
\[
N_{\alpha,\beta}: C^{1,\gamma}(\partial\Omega)\rightarrow C^{2,\gamma}(\partial\Omega), 
\qquad g \mapsto u_g|_{\partial\Omega},
\]
where $u_g$ denotes the solution to \eqref{problem} corresponding to the Neumann data $g$.

%

\begin{thm}\label{dl1}
	Let $\Omega \subset \mathbb{R}^n$ ($n \geq 3$) be a connected bounded open domain with $C^\infty$ boundary. 
	Suppose $\alpha_1, \alpha_2, \beta_1, \beta_2 \in C^\gamma(\overline{\Omega})$. 
	If $N_{\alpha_1,\beta_1} = N_{\alpha_2,\beta_2}$, then $\alpha_1 = \alpha_2$ and $\beta_1 = \beta_2$ in $\Omega$.
\end{thm}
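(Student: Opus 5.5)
We use higher-order linearization around the zero solution. Take Neumann data $g=\varepsilon_1 g_1+\varepsilon_2 g_2+\varepsilon_3 g_3$ with small $\varepsilon=(\varepsilon_1,\varepsilon_2,\varepsilon_3)$. Since $0$ is not a Neumann eigenvalue of $\Delta+k^2(1+\alpha_j)$, the implicit function theorem in $C^{2,\gamma}(\overline\Omega)$ (Schauder estimates for the linear Neumann problem) gives a unique small solution $u_j(\cdot;\varepsilon)$ depending smoothly on $\varepsilon$.

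\textbf{First order.} Differentiate once at $\varepsilon=0$. Then $v_j^{(l)}=\partial_{\varepsilon_l}u_j|_{\varepsilon=0}$ solves
\[
\Delta v+k^2(1+\alpha_j)v=0 \text{ in } \Omega, \qquad \partial_\nu v=g_l \text{ on } \partial\Omega .
\]
Thus $N_{\alpha_1,\beta_1}=N_{\alpha_2,\beta_2}$ implies that the linear NtD maps for $q_j=k^2(1+\alpha_j)$ coincide. Let $v_1$ solve the $q_1$-problem with data $g$, and $w$ the $q_2$-problem with data $h$. Green's identity then gives
\[
\int_\Omega (q_1-q_2)\,v_1 w\,dx=\int_{\partial\Omega}\bigl(g\,N_2h-h\,N_1g\bigr)\,dS=0,
\]
using symmetry of the linear NtD map. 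Because the $g,h$ are arbitrary, we get $\int_\Omega(q_1-q_2)v_1w=0$ for all such solutions.

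To conclude, use a Runge-type density argument: solutions with Neumann data in $C^{1,\gamma}(\partial\Omega)$ are dense, in $L^2(\Omega)$, among $H^1$ solutions of $\Delta v+q_jv=0$ in a slightly larger domain. Then insert complex geometrical optics solutions (Sylvester--Uhlmann, $n\ge3$) $v=e^{x\cdot\zeta}(1+r)$ with $\zeta\cdot\zeta=0$ and $\|r\|_{L^2}=O(|\zeta|^{-1})$; this remainder bound holds for bounded potentials, so $C^\gamma$ suffices. Taking $\zeta_1+\zeta_2=-i\xi$ yields $\widehat{(q_1-q_2)\chi_\Omega}(\xi)=0$ for all $\xi$, hence $\alpha_1=\alpha_2$. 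I expect this step to be the main technical point: the density must hold with the Neumann condition and the nonzero wavenumber, which needs a Hahn--Banach argument together with the non-eigenvalue assumption.

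\textbf{Third order.} With $\alpha:=\alpha_1=\alpha_2$, the first-order solutions $v^{(l)}$ agree for $j=1,2$, and second derivatives vanish because the nonlinearity is cubic. The mixed third derivative $w_j=\partial^3_{\varepsilon_1\varepsilon_2\varepsilon_3}u_j|_0$ solves
\[
\Delta w_j+k^2(1+\alpha)w_j=-6k^2\beta_j\,v^{(1)}v^{(2)}v^{(3)} \text{ in } \Omega, \qquad \partial_\nu w_j=0 \text{ on } \partial\Omega,
\]
and $w_1=w_2$ on $\partial\Omega$. Take a fourth solution $v^{(4)}$ of the linear equation with Neumann data $g_4$ and integrate by parts against $w_1-w_2$. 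The boundary terms vanish, giving
\[
\int_\Omega(\beta_1-\beta_2)\,v^{(1)}v^{(2)}v^{(3)}v^{(4)}\,dx=0 .
\]
Choose $v^{(1)},v^{(2)}$ as CGO solutions as above, and $v^{(3)},v^{(4)}$ as CGO solutions with $\zeta_3=-\zeta_4$, again via Runge approximation. Their product tends to a constant times $1$, and letting the parameters go to infinity gives $\widehat{(\beta_1-\beta_2)\chi_\Omega}=0$. A simpler alternative is to take $v^{(3)},v^{(4)}$ approximating a fixed nonvanishing solution, so that the product is nonzero near any given point. Either way, $\beta_1=\beta_2$.
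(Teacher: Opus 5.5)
This is essentially the paper's proof: well-posedness by the implicit function theorem, first-order linearization with the integral identity, CGO solutions plus Runge approximation to get $\alpha_1=\alpha_2$, and third-order linearization leading to $\int_\Omega(\beta_1-\beta_2)v^{(1)}v^{(2)}v^{(3)}v^{(4)}\,dx=0$. The paper closes this last identity by invoking its two-solution density result (Proposition~\ref{cmx>2}), which amounts to your ``simpler alternative'' with $f=(\beta_1-\beta_2)v^{(3)}v^{(4)}$. Use that ending rather than the one with four CGO solutions whose parameters all go to infinity together, because the latter needs control of triple and quadruple products of remainders, which the bound $\|r\|_{L^2}=O(|\zeta|^{-1})$ does not give.
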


\begin{thm}\label{dl2}
	Let $\Omega \subset \mathbb{R}^2$ be a connected bounded open domain with $C^\infty$ boundary. 
	Assume $\alpha_1, \alpha_2, \beta_1, \beta_2 \in W^{1,p}(\Omega)$ with $p > 2$. 
	If $N_{\alpha_1,\beta_1} = N_{\alpha_2,\beta_2}$, then $\alpha_1 = \alpha_2$ and $\beta_1 = \beta_2$ in $\Omega$.
\end{thm}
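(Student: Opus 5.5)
The plan is a higher-order linearization of the NtD map around the zero solution, followed by two linear inverse problems: first recover $\alpha$ from the linearized map, then recover $\beta$ from the third-order linearization. Take Neumann data $g=\varepsilon_1 g_1+\varepsilon_2 g_2+\varepsilon_3 g_3$ with small $\varepsilon_j$. By the well-posedness of the forward problem for small data (via the implicit function theorem in $C^{2,\gamma}$, or in $W^{2,p}$ when $n=2$, using that $0$ is not a Neumann eigenvalue of $\Delta+k^2(1+\alpha)$), the solution $u_g$ depends smoothly on $\varepsilon=(\varepsilon_1,\varepsilon_2,\varepsilon_3)$ and $u_0=0$.

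\emph{First-order step.} Write $v_j^{(i)}=\partial_{\varepsilon_j}u|_{\varepsilon=0}$ for the coefficients $(\alpha_i,\beta_i)$. It solves $\Delta v+k^2(1+\alpha_i)v=0$ with $\partial_\nu v=g_j$. From $N_{\alpha_1,\beta_1}=N_{\alpha_2,\beta_2}$ we get equality of the linear NtD maps. Integrating by parts against a solution $w$ of the adjoint problem for $\alpha_2$ then yields
\[
\int_\Omega k^2(\alpha_1-\alpha_2)\,v^{(1)}w\,dx=0 .
\]
The Neumann solutions are dense in the space of all $H^1$ solutions, by a Runge-type argument based on the non-eigenvalue assumption. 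Hence the identity holds for all solutions. Complex geometrical optics solutions $e^{x\cdot\zeta}(1+r)$ then give $\widehat{\alpha_1-\alpha_2}=0$, so $\alpha_1=\alpha_2$.
\begin{itemize}
\item For $n\ge3$, use Sylvester--Uhlmann CGO solutions with potential $q=k^2(1+\alpha)\in L^\infty$, together with $\zeta\cdot\zeta=0$ and $\zeta_1+\zeta_2=-i\xi$.
\item For $n=2$, one cannot take $|\zeta|\to\infty$ with a fixed frequency. Instead I would invoke Bukhgeim-type stationary-phase CGOs $e^{i\tau\Phi}$ with holomorphic $\Phi$, or the Imanuvilov--Yamamoto/Blåsten results for potentials in $W^{1,p}$, $p>2$. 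This is exactly where the extra regularity is needed.
\end{itemize}

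\emph{Third-order step.} Because the nonlinearity is cubic, the second derivative vanishes: $\partial^2 u|_{0}=0$, since the second-order equation is linear homogeneous with zero data and hence has the trivial solution by uniqueness. The mixed third derivative $w=\partial^3_{\varepsilon_1\varepsilon_2\varepsilon_3}u|_0$ solves
\[
\Delta w+k^2(1+\alpha)w=-6k^2\beta\, v_1v_2v_3,\qquad \partial_\nu w=0 .
\]
Since $\alpha_1=\alpha_2$ is now known, the $v_j$ coincide for the two problems, and equality of NtD maps gives $w^{(1)}=w^{(2)}$ on $\partial\Omega$. Pair with a fourth solution $v_4$ and integrate by parts; the boundary terms cancel and we obtain
\[
\int_\Omega(\beta_1-\beta_2)\,v_1v_2v_3v_4\,dx=0 .
\]
Runge approximation extends this to all $H^1$ solutions of the linear equation.

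\emph{Main obstacle.} The hardest step is concluding $\beta_1=\beta_2$ from this density of four-fold products. I would choose $v_3,v_4$ to approximate constants or, better, take $v_3=\overline{v_4}$-type pairs, and use CGOs $v_1,v_2$ as in the first step with $v_3v_4\to$ a fixed solution. The simplest route is to take $v_1,v_2$ as CGOs with $\zeta_1+\zeta_2=-i\xi$ and $v_3,v_4$ as CGOs with opposite large phases whose product tends to $1$ in $L^2$. This requires controlling the remainders $r$ uniformly: they are $O(1/|\zeta|)$ in $L^2$, but one needs $L^4$-type bounds, or $L^\infty$ bounds for the $n\ge3$ Hölder case via $C^\gamma$ regularity. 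Once the limit gives $\widehat{\beta_1-\beta_2}(\xi)=0$, the equality $\beta_1=\beta_2$ follows. In two dimensions I would redo this with Bukhgeim solutions, where the stationary phase at a point $x_0$ recovers $(\beta_1-\beta_2)(x_0)$. Here the $W^{1,p}$ regularity is used to control the product of remainders, and I expect this to be the most technical part.
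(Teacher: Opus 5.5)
Your first-order step and your derivation of $\int_\Omega(\beta_1-\beta_2)v_1v_2v_3v_4\,dx=0$ match the paper's higher-order linearization (Appendix, with Theorem~\ref{xxfwtdl} for $n=2$ supplying $\alpha_1=\alpha_2$). The gap is the final step, which you explicitly leave open. Your plan to make all four $v_j$ CGO or Bukhgeim solutions needs remainder control you do not have. Lemma~\ref{CGO} gives only $\|r\|_{L^2(\Omega)}=O(1/|\rho|)$, not the $L^4$ or $L^\infty$ bounds you say are required. A stationary-phase argument with four oscillating factors would also need every cross term to be $o(1/\tau)$, and you do not show this. Separately, ``Runge approximation extends this to all $H^1$ solutions'' is not justified for a four-fold product. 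Lemma~\ref{Runge} gives density only in $L^2(\Omega)$, and a product of four $L^2$-convergent sequences need not converge in $L^1(\Omega)$.

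The missing idea is that no new CGO analysis is needed: the four-fold identity reduces to the bilinear density result already used for $\alpha$. Fix $v_3,v_4\in C^{2,\gamma}(\overline\Omega)$ and absorb them into the weight. Since $W^{1,p}(\Omega)$ is stable under multiplication by $C^1(\overline\Omega)$ functions, $f:=(\beta_1-\beta_2)v_3v_4\in W^{1,p}(\Omega)$, and $\int_\Omega f\,v_1v_2\,dx=0$ for all $C^{2,\gamma}$ solutions $v_1,v_2$. Proposition~\ref{cmx2} then gives $(\beta_1-\beta_2)v_3v_4=0$ in $\Omega$ for all such $v_3,v_4$. Integrating and applying Proposition~\ref{cmx2} once more with $f=\beta_1-\beta_2$ gives $\beta_1=\beta_2$. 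This is what the paper's application of Proposition~\ref{cmx>2} to \eqref{V0123} amounts to, and Theorem~\ref{dl2} follows the same way with Proposition~\ref{cmx2}. Only two factors are ever approximated while the others stay bounded, so the Runge issue disappears too. Your route could be rescued: first send the parameter of $v_3,v_4=e^{\pm\rho\cdot x}(1+\cdots)$ to infinity with $v_1v_2$ fixed and bounded, which needs only $L^2$ remainder bounds. But it still ends with Proposition~\ref{cmx2}, so it is unnecessary.
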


%

\begin{rem}
For $n=2$, from the embedding theorem, if $0<\gamma\leq1-{2}/{p}$ and $p>2$, $W^{1,p}(\Omega)\hookrightarrow C^{\gamma}(\overline{\Omega})$. Therefore it finds that, to achieve uniqueness in the two-dimensional case, we need to impose stronger regularity conditions on the reconstructed parameters.
\end{rem}

\vspace{10pt}
\noindent\textbf{Background and motivation.}
Problem \eqref{problem} arises from the Kerr effect in nonlinear optics on a bounded domain 
$\Omega\subset\mathbb{R}^n$ ($n\ge2$) with smooth boundary. 
According to \cite{Born2023}, the scalar field $u$ satisfies the semilinear Helmholtz equation with Neumann boundary condition
\begin{equation}\label{problem come from}
\begin{cases}
\Delta u+k^2(1+\alpha(x))u+k^2\beta(x)\lvert u \rvert ^{2}u= 0 &\text{in } \Omega,\\
\dfrac{\partial u}{\partial \nu}=g(x) &\text{on } \partial\Omega,
\end{cases}
\end{equation}
where $\alpha(x)$ and $\beta(x)$ denote the linear and nonlinear susceptibilities describing the electromagnetic properties of the medium. 
If $u$ is restricted to be real-valued, equation \eqref{problem come from} reduces to the form of problem \eqref{problem} considered in this paper.

The higher-order linearization method has become an effective tool for studying inverse problems for nonlinear partial differential equations. 
In \cite{Aremark2020}, Krupchyk and Uhlmann investigated the Dirichlet problem
\[
-\Delta u+V(x,u)=0, \qquad 
V(x,u)=\sum_{m=2}^{\infty}V_m(x)\frac{u^m}{m!},
\]
and proved that the coefficients $V_m(x)$ can be uniquely determined from the partial Dirichlet-to-Neumann (DtN) map. 
Later, in \cite{Partialdata2020}, they considered
\[
-\Delta u+q(x)(\nabla u)^2+V(x,u)=0,
\qquad 
V(x,u)=\sum_{m=3}^{\infty}V_m(x)\frac{u^m}{m!},
\]
and showed that both $q(x)$ and $V_m(x)$ are uniquely determined from partial DtN data. 
For equations on Riemannian manifolds, Feizmohammadi and Oksanen \cite{k=12020} studied
\[
-\Delta_g u + V(x,u)=0, \qquad 
V(x,u)=\sum_{m=1}^{\infty}V_m(x)\frac{u^m}{m!},
\]
and established the unique recovery of $V_m(x)$ from the DtN map. 
Higher-order linearization techniques have also been successfully applied to inverse problems for nonlinear hyperbolic equations 
\cite{Inversehyperbolic1,Inversehyperbolic2,Inversehyperbolic3} 
and to coupled systems arising in mean field game theory 
\cite{Liu1,Liu2,Liu3,Liu4}.

For semilinear Helmholtz equations, several uniqueness results have been obtained in recent years. 
Lu \cite{HelmholtzLu} proved that for the Dirichlet problem
\[
-\Delta u-k^2u+q(x)u^2=0,
\]
the coefficient $q(x)$ can be uniquely determined by the Dirichlet-to-Neumann map. 
This result was later extended in \cite{Semielliptic2023} to the equation 
\[
(-\Delta_g+V)u+qu^2=0
\]
on a Riemannian manifold. 
Moreover, \cite{Recover2024} studied a coupled semilinear Helmholtz system with quadratic nonlinearities and showed that multiple coefficients can be uniquely recovered from internal measurements. 
From a computational perspective, \cite{Born2023} developed numerical algorithms for recovering piecewise constant coefficients $\alpha$ and $\beta$ in the Dirichlet problem.

Motivated by these developments, our goal is to complement the uniqueness theory for the Neumann inverse boundary value problem associated with
\[
\Delta u+k^2(1+\alpha(x))u+k^2\beta(x)u^3=0.
\]
In particular, we prove that the coefficients $\alpha(x)$ and $\beta(x)$ can be uniquely determined from the Neumann-to-Dirichlet map $N_{\alpha,\beta}$ using a higher-order linearization approach. 
Furthermore, we develop a numerical reconstruction framework for recovering these coefficients from boundary measurements. 
The forward problem is discretized using a finite difference scheme together with a quasi-Newton iteration, while the inverse problem is addressed within a Bayesian inference framework with posterior sampling.

\vspace{10pt}
\noindent\textbf{Technical developments and discussion.}
This paper investigates the unique recovery of the linear and nonlinear coefficients in the semilinear Helmholtz equation from boundary measurement data. 
As a first step, we establish the well-posedness of the forward problem for the Neumann boundary value problem by applying the implicit function theorem through the construction of suitable holomorphic mappings. 

To analyze the inverse problem, we first study the corresponding linearized inverse problem. Due to dimensional restrictions in the construction of complex geometrical optics (CGO) solutions, the analysis is divided into two cases. 
For the two-dimensional case ($n=2$), a uniqueness result is obtained using existing results in \cite{2D2015}. 
For dimensions $n\ge3$, we construct CGO solutions for the linear Helmholtz equation and prove their existence using the Hahn–Banach theorem. 
Combined with Runge-type approximation arguments and Fourier analysis techniques, this yields the uniqueness of the associated linear inverse problem. 
Based on these density results, the higher-order linearization method is then applied to establish the unique determination of both the linear and nonlinear coefficients in the semilinear Helmholtz equation from the Neumann-to-Dirichlet map.

In addition to the theoretical analysis, we develop a numerical reconstruction framework for recovering the unknown coefficients from boundary measurements. 
The forward problem is discretized using a finite difference scheme and solved by a quasi-Newton iteration, and the inverse problem is formulated within a Bayesian inference framework. 
Posterior distributions of the coefficients are explored using sampling-based methods (i.e., pCN algorithm \cite{Dashti2015}),  enabling the practical reconstruction of the unknown parameters from noisy boundary data.

The rest of the paper is organized as follows. 
Section~\ref{sec2} establishes the well-posedness of the forward problem. 
Section~\ref{sec3} studies the associated linear inverse problem. 
Section~\ref{sec4} proves the uniqueness results for the nonlinear inverse problem using higher-order linearization. 
Section~\ref{sec5} presents the reconstruction method together with several numerical examples. 
Finally, Section~\ref{sec6} concludes the paper.

\section{Well-posedness of the forward problem}\label{sec2}
In this section, we prove the following theorem for the well-posedness of problem (\ref{problem}).

\begin{thm}\label{sdx}
If 0 is not the Neumann eigenvalue of $\Delta+k^2(1+\alpha(x))$, then exist $\delta>0$, $C>0$ such that for any $g\in B_{\partial\Omega}:=\{g\in C^{1,\gamma}(\partial\Omega):\lVert g \rVert_{C^{1,\alpha}(\partial\Omega)}<\delta\}$, the problem (\ref{problem}) has a solution $u=u_{g}\in C^{2,\gamma}(\overline\Omega)$, which satisfies
\begin{center}
	$\lVert u \rVert_{C^{2,\gamma}(\overline\Omega)}\leq C 	\lVert g \rVert_{C^{1,\gamma}(\partial\Omega)}$.
\end{center}
\noindent
Furthermore, the solution $u$ is unique within the class $\{u\in C^{2,\gamma}(\overline{\Omega}):\lVert u \rVert_{C^{2,\gamma}(\overline\Omega)}\leq C\delta\}$ and $u$ is depends holomorphically on $g\in B_{\delta}(\partial\Omega)$.
\end{thm}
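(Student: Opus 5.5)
We will prove the result with the holomorphic implicit function theorem on complex Banach spaces, as in the Dirichlet case of \cite{Aremark2020}. Let $X=C^{1,\gamma}(\partial\Omega)$, $Y=C^{2,\gamma}(\overline\Omega)$ and $Z=C^{\gamma}(\overline\Omega)\times C^{1,\gamma}(\partial\Omega)$, all complexified. Define
\[
F: X\times Y\to Z,\qquad F(g,u)=\Bigl(\Delta u+k^2(1+\alpha)u+k^2\beta u^3,\ \partial_\nu u|_{\partial\Omega}-g\Bigr).
\]
A function $u$ solves \eqref{problem} with data $g$ exactly when $F(g,u)=0$, and $F(0,0)=0$.

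\textbf{Holomorphy of $F$.} The map $g\mapsto -g$ is bounded linear. The maps $u\mapsto\Delta u$ and $u\mapsto\partial_\nu u|_{\partial\Omega}$ are bounded linear from $Y$ into $C^\gamma(\overline\Omega)$ and $C^{1,\gamma}(\partial\Omega)$ respectively. Multiplication by $\alpha,\beta\in C^\gamma(\overline\Omega)$ is bounded on $C^\gamma$, since $C^\gamma$ is a Banach algebra and $Y\hookrightarrow C^\gamma$. The term $u\mapsto u^3$ is the restriction to the diagonal of a bounded trilinear map $Y^3\to C^\gamma$, so it is a continuous homogeneous polynomial and hence holomorphic. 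Therefore $F$ is a polynomial map between Banach spaces and is holomorphic. In particular $F$ is $C^1$, and its partial derivative at $(0,0)$ is
\[
\partial_uF(0,0)v=\bigl(\Delta v+k^2(1+\alpha)v,\ \partial_\nu v|_{\partial\Omega}\bigr).
\]

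\textbf{The main step: $\partial_uF(0,0)$ is a linear isomorphism $Y\to Z$.} Injectivity is exactly the assumption that $0$ is not a Neumann eigenvalue of $\Delta+k^2(1+\alpha)$.

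For surjectivity, fix $(f,h)\in Z$. We use Fredholm theory:
\begin{itemize}
\item Schauder theory for the Neumann problem on the smooth domain $\Omega$ (Gilbarg–Trudinger, Thm.~6.31) shows that $L_\lambda v=(\Delta v-\lambda v,\partial_\nu v)$ is an isomorphism $Y\to Z$ for $\lambda>0$.
\item We have $\partial_uF(0,0)=L_\lambda+K$ with $Kv=\bigl((k^2(1+\alpha)+\lambda)v,0\bigr)$.
\item $K:Y\to Z$ is compact, because $C^{2,\gamma}\hookrightarrow C^{\gamma}$ compactly and multiplication by a $C^\gamma$ function is bounded.
\item Hence $\partial_uF(0,0)$ is Fredholm of index zero. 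An injective index-zero Fredholm operator is surjective.
\end{itemize}
The bounded inverse theorem then gives boundedness of the inverse. This step carries the real content: Schauder regularity up to the boundary with only $C^\gamma$ coefficients, plus the compactness argument. A possible subtlety is that $\alpha$ is real but the spaces are complex. This causes no difficulty, since the operator has real coefficients and uniqueness holds for real and imaginary parts separately.

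\textbf{Conclusion.} The holomorphic implicit function theorem gives $\delta>0$, a neighbourhood $\{\|u\|_Y\le C\delta\}$ of $0$ in $Y$, and a holomorphic map $S:B_\delta\to Y$ with $S(0)=0$ and the following property: for $g$ in the ball and $u$ in that neighbourhood, $F(g,u)=0$ holds if and only if $u=S(g)$. This gives existence, local uniqueness and holomorphic dependence on $g$.

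For the estimate, we shrink $\delta$ if necessary so that $S$ is $C^1$ with $\|DS\|\le C$ on the ball. The mean value inequality then gives $\|u_g\|_{C^{2,\gamma}}=\|S(g)-S(0)\|_Y\le C\|g\|_{C^{1,\gamma}(\partial\Omega)}$, and in particular $\|u_g\|_Y\le C\delta$. This is consistent with the uniqueness class, after enlarging $C$ and shrinking $\delta$ as needed.
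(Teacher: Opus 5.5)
Your proposal is correct and follows the paper's route. You use the same map $F$ on the same H\"older spaces, take injectivity of $\partial_uF(0,0)$ from the non-eigenvalue hypothesis, and apply the holomorphic implicit function theorem, getting the estimate from the Lipschitz bound on the solution map $S$. The differences are only in how the sub-steps are justified: you get holomorphy directly from the polynomial structure of $F$, and surjectivity from Schauder theory plus a Fredholm index-zero compact-perturbation argument, whereas the paper checks weak holomorphy along complex lines and cites \cite[Lemma 2.2]{Mikko1}; your versions are, if anything, more self-contained.
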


\begin{proof}
We use the implicit function theorem to prove.

Let
$B_{1}=C^{1,\gamma}(\partial\Omega)$, $B_{2}=C^{2,\gamma}(\overline\Omega)$, $B_{3}=C^{\gamma}(\overline\Omega)\times\ C^{1,\gamma}(\partial\Omega)$. Consider the map
\begin{center}
	$F:B_{1}\times B_{2} \rightarrow B_{3}$,
\end{center}
\begin{center}
	$F(g,u)=(\Delta u+k^2(1+\alpha(x))u+k^2\beta(x)u^3, \dfrac{ \partial u }{ \partial \nu }\Big\vert_{\partial\Omega}-g)$.
\end{center}

Firstly, we prove that $F$ is well-defined because
\begin{center}
	$\lVert uv \rVert_{C^{m,\gamma}(\overline\Omega)}\leq C(\lVert u \rVert_{C^{m,\gamma}(\overline\Omega)}\lVert v \rVert_{L^{\infty}(\Omega)}+\lVert v \rVert_{C^{m,\gamma}(\overline\Omega)}\lVert u \rVert_{L^{\infty}(\Omega)}), $
\ $u$, $v\in C^{m,\gamma}(\overline\Omega)$
\end{center}
\noindent

According to the definition of the space $C^{m,\gamma}(\overline\Omega)$, $uv\in C^{m,\gamma}(\overline\Omega)$. This indicates that the space $C^{m,\gamma}(\overline\Omega)$ is closed under multiplication operations. Consequently $k^2(1+\alpha(x))u$, $k^2\beta(x)u^3\in C^{\gamma}(\overline\Omega)$. Therefore, $\Delta u+k^2(1+\alpha(x))u+k^2\beta(x)u^3\in C^{\gamma}(\overline\Omega)$.
\vspace{1mm}

Because of $u\in C^{2,\gamma}(\overline\Omega)$, $\dfrac{ \partial u }{ \partial \nu }\Big\vert_{\partial\Omega}\in C^{1,\gamma}(\partial\Omega)$ and $\dfrac{ \partial u }{ \partial \nu }\Big\vert_{\partial\Omega}-g\in C^{1,\gamma}(\partial\Omega)$.
\vspace{1mm}

Thus, we have proven that the space mapped to $F$ is consistent with its definition, and $F$ is a map, therefore F is well-defined

Secondly, we prove that $F$ is holomorphic. Because F is locally bounded, according to \cite[p.133, Theorem 1]{Spectral1987}, it is sufficient to prove that F is weakly holomorphic.

For this purpose, let any $(g_{0},u_{0})$, $(g,u)\in B_{1}\times B_{2}$, and we prove that
\begin{center}
	$l:\mathbb{C}\rightarrow B_{3}$,
\end{center}
\begin{center}
	$\lambda\longmapsto F((g_{0},u_{0})+\lambda(g,u))$,
\end{center}
is holomorphic on $\mathbb{C}$.

Because
$$\lim_{h\rightarrow 0,h\in\mathbb{C}}\dfrac{l(\lambda+h)-l(\lambda)}{h}$$
$$\begin{aligned}
	&=\lim_{h\rightarrow 0,h\in\mathbb{C}}\dfrac{F((g_{0},u_{0})+(\lambda+h)(g,u))-F((g_{0},u_{0})+\lambda(g,u))}{h}\\
	&=\lim_{h\rightarrow 0,h\in\mathbb{C}}\dfrac{F(g_{0}+(\lambda+h)g,u_{0}+(\lambda+h)u)-F(g_{0}+\lambda g,u_{0}+\lambda u)}{h}\\
	&=\lim_{h\rightarrow 0,h\in\mathbb{C}}\dfrac{\Big(h\Delta{u}+hk^2(1+\alpha(x))u+k^2\beta(x)[(u_{0}+(\lambda+h)u)^3-(u_{0}+\lambda u)^3],h\dfrac{ \partial u }{ \partial \nu }\Big\vert_{\partial\Omega}-hg\Big)}{h}\\
	&=(\Delta u+k^2(1+\alpha(x))u+3k^2\beta(x)(u_{0}+\lambda u)^2u, \dfrac{ \partial u }{ \partial \nu }\Big\vert_{\partial\Omega}-g)\in B_{3},
\end{aligned}$$\\
\noindent
$l$ is holomorphic on $\mathbb{C}$. Therefore, $F$ is holomorphic.

Then, prove that $\partial_{u}F(0,0)$: $B_{2}\rightarrow B_{3}$ is a linear isomorphism.

$$\partial_{u}F(0,0):B_{2}\rightarrow B_{3}, $$
$$\partial_{u}F(0,0)v=(\Delta{v}+k^2(1+\alpha(x))v,\dfrac{ \partial v }{ \partial \nu }\Big\vert_{\partial\Omega}). $$
Because 0 is not a Neumann eigenvalue of $\Delta+k^2(1+\alpha(x))$, $\partial_{u}F(0,0)v=0$ if and only if $v=0$. Therefore $\partial_{u}F(0,0)$ is injection. From \cite[Lemma 2.2]{Mikko1}, it is easy to know that $\partial_{u}F(0,0)$ is surjection. Moreover, by the definition of $\partial_{u}F(0,0)$ that it is linear. In conclusion, $\partial_{u}F(0,0)$: $B_{2}\rightarrow B_{3}$ is a linear isomorphism.

By the implicit function theorem, ${\exists}$ $\delta>0$ and a unique holomorphic map $S$: $B_{\delta}(\partial\Omega)\rightarrow B_{2}$ s.t. for all $g\in B_{\delta}(\partial\Omega)$,
\begin{center}
	$F(g,S(g))=F(0,0),$ $S(0)=0.$
\end{center}

Let $u=S(g)$, because $S$ is holomorphic map on $B_{\delta}(\partial\Omega)$, which means $S$ about $g\in B_{\delta}(\partial\Omega)$ is holomorphic, and $S$ is Lipschitz continuous. Then
\begin{center}
 $\lVert u \rVert_{C^{2,\gamma}(\overline\Omega)}\leq C 	\lVert g \rVert_{C^{1,\gamma}(\partial\Omega)}$.
\end{center}
\end{proof}

\section{Uniqueness of the linear inverse problem}\label{sec3}

Consider the uniqueness of the following linear inverse problem which is the linear setting of (\ref{problem}).
\begin{equation}\label{xxfwt}
	\begin{cases}
		\Delta u+k^2(1+\alpha(x))u= 0 &in \ \Omega,\\
		\dfrac{ \partial u }{ \partial \nu }=g &on \ \partial\Omega,
	\end{cases}
\end{equation} 	
where $\Omega\subset\mathbb{R}^n$, $n\geq2$, is a connected bounded open domain with $C^{\infty}$ boundary, $\nu$ is the unit outward normal to $\partial \Omega$, the wavenumber $k$ is real, $\alpha(x)$$\in C^\gamma (\overline\Omega)$, $g(x)\in C^{1,\gamma}(\partial\Omega)$, $0<\alpha<1$, $\alpha(x)$, $g(x)$ are real value functions and $0$ is not the Neumann eigenvalue of $\Delta+k^2(1+\alpha(x))$.

Define the Neumann to Dirichlet map:
\begin{center}
	$N_{\alpha}:C^{1,\gamma}(\partial\Omega)\rightarrow C^{2,\gamma}(\partial\Omega)$,
\end{center}
\begin{center}
	$g\longmapsto u_{g}\vert_{\partial\Omega}$.
\end{center}

\begin{prop}\label{mt1}
Let $n\geq2$, $\alpha_{1}(x)$, $\alpha_{2}(x)\in C^\gamma (\overline\Omega)$, for all $g_{1}(x)$, $g_{2}(x)\in C^{1,\gamma}(\partial\Omega)$, we have
$$((N_{\alpha_{1}}-N_{\alpha_{2}})g_{1},g_{2})_{L^2(\partial\Omega)}=\int_{\Omega} -k^2(\alpha_{1}-\alpha_{2})u_{1}\overline{u_{2}} dx, $$
where $u_{1}$, $u_{2}$ is the solution of (\ref{xxfwt}) about $\alpha_{1}(x)$, $g_{1}(x)$ and $\alpha_{2}(x)$, $g_{2}(x)$.
\end{prop}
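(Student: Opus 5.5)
The plan is to derive the identity from Green's second identity for $u_1$ and $\overline{u_2}$, combined with the symmetry of the Neumann-to-Dirichlet map $N_{\alpha_2}$. Throughout, $u_1$ and $u_2$ are the unique solutions of (\ref{xxfwt}) for $(\alpha_1,g_1)$ and $(\alpha_2,g_2)$. They exist and lie in $C^{2,\gamma}(\overline\Omega)$ because $0$ is not a Neumann eigenvalue (the linear part of Theorem \ref{sdx}). This regularity justifies all integrations by parts.

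\textbf{Step 1 (conjugate equation).} Since $k$, $\alpha_2$ and $g_2$ are real, $\overline{u_2}$ satisfies $\Delta\overline{u_2}+k^2(1+\alpha_2)\overline{u_2}=0$ in $\Omega$ and $\partial_\nu\overline{u_2}=\overline{g_2}$ on $\partial\Omega$. By uniqueness of the Neumann problem, $\overline{u_2}=u_2$, but only the equation is needed.

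\textbf{Step 2 (symmetry of $N_{\alpha_2}$).} Let $w$ be the $\alpha_2$-solution with Neumann data $g_1$. Apply Green's second identity to $w$ and $\overline{u_2}$. The zeroth-order terms $k^2(1+\alpha_2)w\overline{u_2}$ cancel, which gives
\[
\int_{\partial\Omega}(N_{\alpha_2}g_1)\,\overline{g_2}\,ds=\int_{\partial\Omega}g_1\,\overline{u_2}\,ds .
\]

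\textbf{Step 3 (Green's identity for the pair $u_1,\overline{u_2}$).} Write
\[
\int_\Omega\big(u_1\Delta\overline{u_2}-\overline{u_2}\Delta u_1\big)\,dx=\int_{\partial\Omega}\big(u_1\partial_\nu\overline{u_2}-\overline{u_2}\partial_\nu u_1\big)\,ds .
\]
On the boundary, $u_1=N_{\alpha_1}g_1$ and $\partial_\nu u_1=g_1$. Using Step 2 to replace $\int g_1\overline{u_2}$ by $\int(N_{\alpha_2}g_1)\overline{g_2}$, the right side becomes $((N_{\alpha_1}-N_{\alpha_2})g_1,g_2)_{L^2(\partial\Omega)}$. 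For the left side, substitute $\Delta u_1=-k^2(1+\alpha_1)u_1$ and $\Delta\overline{u_2}=-k^2(1+\alpha_2)\overline{u_2}$. The terms with coefficient $k^2\cdot 1$ cancel, leaving a multiple of $\int_\Omega k^2(\alpha_1-\alpha_2)u_1\overline{u_2}\,dx$.

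\textbf{Main obstacle: the overall sign.} The delicate step is fixing the sign in front of $k^2\int(\alpha_1-\alpha_2)u_1\overline{u_2}$, so that the identity comes out with the factor $-k^2$ exactly as stated. I will track it carefully through the orientation convention of the normal in Green's formula, the order of the factors in the boundary pairing, and which of $u_1$, $\overline{u_2}$ carries the Laplacian.

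When I carry out the substitution with the conventions as literally written (outward $\nu$, pairing $(f,g)=\int_{\partial\Omega}f\overline g\,ds$), I obtain $+k^2\int_\Omega(\alpha_1-\alpha_2)u_1\overline{u_2}\,dx$, not $-k^2$. Reaching the stated $-k^2$ would therefore require a different convention, for instance the inward normal in the Green formula used for the NtD map, or the opposite ordering of the boundary pairing. I cannot currently identify such a convention in the paper's setup. This sign step is unresolved, and any proof of the identity as stated would have to supply it.
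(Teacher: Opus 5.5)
Your computation is correct, and so is the sign you get. With outward $\nu$ and $(f,g)_{L^2(\partial\Omega)}=\int_{\partial\Omega}f\overline{g}\,dS$, Green's second identity for $u_1$ and $\overline{u_2}$, together with the self-adjointness of $N_{\alpha_2}$, gives
\[
((N_{\alpha_1}-N_{\alpha_2})g_1,g_2)_{L^2(\partial\Omega)}=\int_\Omega\bigl(u_1\Delta\overline{u_2}-\overline{u_2}\Delta u_1\bigr)\,dx=k^2\int_\Omega(\alpha_1-\alpha_2)u_1\overline{u_2}\,dx .
\]
So the $-k^2$ in the statement is a sign error in the paper, not a step you failed to close. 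No convention within the paper's setup rescues it.

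The paper's proof uses the same two ingredients as yours: Green's formula and $(N_{\alpha_2}g_1,g_2)=(g_1,N_{\alpha_2}g_2)$. It routes them through Green's first identity, however, and attaches the coefficients to the wrong pairings.
\begin{itemize}
\item In $(N_{\alpha_1}g_1,g_2)=\int_{\partial\Omega}u_1\,\partial_\nu\overline{u_2}\,dS$ the Laplacian falls on $\overline{u_2}$. This produces $-k^2(1+\alpha_2)$, not $-k^2(1+\alpha_1)$ as written.
\item In $(g_1,N_{\alpha_2}g_2)=\int_{\partial\Omega}\partial_\nu u_1\,\overline{u_2}\,dS$ the Laplacian falls on $u_1$. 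This produces $-k^2(1+\alpha_1)$, not $-k^2(1+\alpha_2)$.
\end{itemize}
The paper's expressions would hold only if $u_2$ (respectively $u_1$) solved the equation with the other coefficient. In that case the two terms $\int_\Omega\nabla u_1\cdot\nabla\overline{u_2}\,dx$ would involve different functions and would not cancel. Interchanging the coefficients is exactly what flips the sign. Your second-identity route never produces the gradient terms, which is why your bookkeeping comes out right.

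The sign does not affect how the proposition is used. The proof of Theorem \ref{xxfwtdl} and the appendix only need the right-hand side to vanish when $N_{\alpha_1}=N_{\alpha_2}$. So state the identity with $+k^2$; your argument is then a complete proof.

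One small correction to your closing remark. Of the two conventions you mention, only the orientation of the normal (using the inward normal) would flip the sign. Reversing the order of the boundary pairing at most conjugates the expression. Since $\alpha_j$ and $g_j$ are real, the $u_j$ are real and $N_{\alpha_j}$ is self-adjoint, so that change does nothing.
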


\begin{proof}
According to the definition of the inner product of $L^2$ and Green's formula, there are
$$\begin{aligned}
	(N_{\alpha}g_{1}, g_{2})_{L^2(\partial\Omega)}&=\int_{\partial\Omega}u_{1}\cdot\partial_{\nu}\overline{u_{2}}dS\\
	&=\int_{\Omega}u_{1}\cdot\Delta\overline{u_{2}}+\nabla u_{1}\cdot \nabla \overline{u_{2}}dx\\
	&=\int_{\Omega}\nabla u_{1}\cdot \nabla \overline{u_{2}}-k^2(1+\alpha(x))\overline{u_{2}}\cdot u_{1}dx\\
	&=\int_{\Omega}\nabla u_{1}\cdot \nabla \overline{u_{2}}+\overline{u_{2}}\cdot\Delta u_{1}dx\\
	&=\int_{\partial\Omega}\partial_{\nu}u_{1}\cdot \overline{u_{2}}dS\\
	&=(g_{1},N_{\alpha}g_{2})_{L^2(\partial\Omega)}.
\end{aligned}$$
\noindent It can be seen that
$$(N_{\alpha_{1}}g_{1},g_{2})_{L^2(\partial\Omega)}=\int_{\Omega}\nabla u_{1}\cdot \nabla \overline{u_{2}}-k^2(1+\alpha_{1}(x))\overline{u_{2}}\cdot u_{1}dx, $$
$$\begin{aligned}
	(N_{\alpha_{2}}g_{1}, g_{2})_{L^2(\partial\Omega)}&=(g_{1},N_{\alpha_{2}}g_{2})_{L^2(\partial\Omega)}\\
	&=\int_{\Omega}\nabla u_{1}\cdot \nabla \overline{u_{2}}-k^2(1+\alpha_{2}(x))\overline{u_{2}}\cdot u_{1}dx,
\end{aligned}$$
so we have $$\begin{aligned}
	(N_{\alpha_{1}}g_{1},g_{2})_{L^2(\partial\Omega)}-(N_{\alpha_{2}}g_{1},g_{2})_{L^2(\partial\Omega)}&=((N_{\alpha_{1}}-N_{\alpha_{2}})g_{1},g_{2})_{L^2(\partial\Omega)}\\
	&=\int_{\Omega} -k^2(\alpha_{1}-\alpha_{2})u_{1}\overline{u_{2}} dx.
\end{aligned}$$
\end{proof}



Next, we will prove the following uniqueness theorem for linear model the (\ref{xxfwt}).

\begin{thm}\label{xxfwtdl}
Let $\Omega\subset\mathbb{R}^n$, be a connected bounded open domain with $C^{\infty}$ boundary. \vspace{1mm}For $n\geq3$, $\alpha_{1}(x)$, $\alpha_{2}(x)\in C^{\alpha}(\overline\Omega)$, if $N_{\alpha_{1}}=N_{\alpha_{2}}$, then $\alpha_{1}(x)=\alpha_{2}(x)$ in $\Omega$. 
\vspace{1mm}For $n=2$, assume $\alpha_{1}(x)$, $\alpha_{2}(x)\in W^{1,p}(\Omega)$, $p>2$, if $N_{\alpha_{1}}=N_{\alpha_{2}}$, then $\alpha_{1}(x)=\alpha_{2}(x)$ in $\Omega$.
\end{thm}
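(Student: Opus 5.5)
Starting from Proposition \ref{mt1}, if $N_{\alpha_1}=N_{\alpha_2}$ then
\[
\int_\Omega (\alpha_1-\alpha_2)\,u_1\overline{u_2}\,dx=0
\]
for all solutions $u_j$ of (\ref{xxfwt}) with Neumann data in $C^{1,\gamma}(\partial\Omega)$. Since $\alpha_2$, $g_2$ and $k$ are real, $\overline{u_2}$ is again a solution with coefficient $\alpha_2$. So the identity holds for products $u_1u_2$ of two solutions. The proof then has three steps: (i) an orthogonality identity for such products, (ii) an extension of it by Runge-type density to all $H^1$ solutions, and (iii) a completeness argument for products, done separately for $n\ge3$ and $n=2$.

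\textbf{Density.} Let $q_j=k^2(1+\alpha_j)$. I will show that the set $\{u\in C^{2,\gamma}(\overline\Omega): \Delta u+q_ju=0,\ \partial_\nu u\in C^{1,\gamma}\}$ is dense in $\{u\in H^1(\Omega):\Delta u+q_ju=0\}$ with respect to the $L^2(\Omega)$ norm. By Hahn--Banach, take $f\in L^2(\Omega)$ annihilating the former set. Let $w$ solve $\Delta w+q_jw=f$ with $\partial_\nu w=0$; this is solvable because $0$ is not a Neumann eigenvalue. Green's formula gives $\int_{\partial\Omega} g\,w\,dS=0$ for all $g$, so $w|_{\partial\Omega}=0$. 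Pairing $f$ with any $H^1$ solution $v$ then gives
\[
\int f v=\int_{\partial\Omega}(\partial_\nu w\, v-w\,\partial_\nu v)=0 .
\]
Continuity of the bilinear form in $L^2\times L^2$ then yields $\int(\alpha_1-\alpha_2)u_1u_2=0$ for all $H^1$ solutions, in particular for CGO solutions.

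\textbf{Case $n\ge3$.} For $\xi\in\mathbb{R}^n$ choose $\zeta_1,\zeta_2\in\mathbb{C}^n$ with $\zeta_j\cdot\zeta_j=0$, $\zeta_1+\zeta_2=-i\xi$ and $|\zeta_j|\to\infty$; this needs $n\ge3$. Build $u_j=e^{\zeta_j\cdot x}(1+r_j)$ with $\|r_j\|_{L^2}\le C/|\zeta_j|$ (Sylvester--Uhlmann). The paper asks for existence via Hahn--Banach, i.e.\ via a Carleman estimate
\[
\|e^{-\zeta\cdot x}\varphi\|_{L^2}\le \frac{C}{|\zeta|}\,\|e^{-\zeta\cdot x}(\Delta+q)\varphi\|_{L^2}
\]
for $\varphi\in C_0^\infty$, applied to the adjoint operator. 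Extending $q_j$ to a bounded compactly supported function makes everything work. Plugging these into the orthogonality identity and letting $|\zeta|\to\infty$ gives
\[
\int_\Omega(\alpha_1-\alpha_2)e^{-i\xi\cdot x}\,dx=0 \quad\text{for all } \xi .
\]
Hence the Fourier transform of $(\alpha_1-\alpha_2)\chi_\Omega$ vanishes, and $\alpha_1=\alpha_2$.

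\textbf{Case $n=2$.} Phase-linear CGOs give only leading-order information in the plane, so I would instead invoke the density result of \cite{2D2015} (Bukhgeim/Blåsten--Imanuvilov--Yamamoto type). It states that for potentials in $W^{1,p}$, $p>2$, products of solutions of two Schrödinger equations are dense in $L^1$, or equivalently that $\int(q_1-q_2)u_1u_2=0$ for all solutions forces $q_1=q_2$. These solutions use Bukhgeim-type phases $e^{\tau(z-z_0)^2}$ and stationary phase at $z_0$. Combined with the density step, this gives $\alpha_1=\alpha_2$.

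The main obstacle is the CGO construction with remainder decay for merely Hölder (hence $L^\infty$) potentials. The delicate point is the Carleman/Fourier-multiplier estimate for $e^{-\zeta\cdot x}\Delta e^{\zeta\cdot x}$, whose inverse must be bounded by $C/|\zeta|$ in weighted $L^2$; I would take it from Sylvester--Uhlmann's estimate. A second check is that the CGOs lie in $H^1(\Omega)$ so that the density step applies; this follows from interior elliptic regularity.
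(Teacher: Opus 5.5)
Your proposal is correct and follows essentially the same route as the paper: the orthogonality identity of Proposition~\ref{mt1}, the Hahn--Banach Runge density argument of Lemma~\ref{Runge}, CGO solutions from a linear-weight Carleman estimate plus Hahn--Banach with a Fourier-transform conclusion for $n\ge3$, and an appeal to the cited two-dimensional $W^{1,p}$ result for $n=2$. Your version is in places more careful than the paper's: you use all Neumann data to get $w|_{\partial\Omega}=0$, handle the complex conjugate explicitly, and extend $q_j$ to secure $H^1(\Omega)$ membership. The only quibble is that ``dense in $L^1$, or equivalently'' overstates what the cited 2D reference provides; the paper simply applies its Theorem~1 to the equal Cauchy data, and the orthogonality statement for $W^{1,p}$ differences that you actually use is what its proof delivers.
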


For the case $n=2$, the uniqueness of the linear model is a direct consequence of \cite[Theorem 1]{2D2015}. Therefore, we will only provide the proof for the three-dimensional case here. Now, to ensure the completeness of the article, we first follow the proof process in \cite{Mikko2} to prove Lemma 3.3, and then obtain Proposition 3.4 and Proposition 3.5. Finally, the proof of Theorem \ref{xxfwtdl} for $n\geq3$ is given.

\begin{lem}\label{CGO}
Let  $\tau\in \mathbb{R}$, $\rho=\tau(e+i\varsigma)\in \mathbb{C}^{n}$, here $e$, $\varsigma\in\mathbb{R}^{n}$ are unit vectors, and $\alpha(x)\in C^{\alpha}(\overline{\Omega})$, there exist $C$, $\tau_{0}>0$ \vspace{1mm}such that for any $\rho$ that satisfies $e\cdot\varsigma=0$ and $\left|\tau\right|\geq\frac{\sqrt{2}}{2}\tau_{0}$, the equation $\Delta u+k^2(1+\alpha(x))u= 0$ has a solution $u\in H^{1}(\Omega)$, and
\begin{equation}
	u=e^{Re(\rho)\cdot x}(e^{iIm(\rho)\cdot x}+r),
\end{equation}
where $r\in L^{2}(\Omega)$ satisfies
\begin{equation}
	\lVert r \rVert_{L^{2}(\Omega)}\leq\frac{C}{\left|\rho\right|}.
\end{equation}
\end{lem}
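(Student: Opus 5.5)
We construct the CGO solutions by the standard Sylvester--Uhlmann/Hahn--Banach duality argument, following \cite{Mikko2}. Write $q(x)=k^2(1+\alpha(x))\in L^\infty(\Omega)$ and $\rho=\tau(e+i\varsigma)$. Since $e\cdot\varsigma=0$ and $|e|=|\varsigma|=1$, we have $\rho\cdot\rho=\tau^2(1-1)=0$, so $e^{\rho\cdot x}$ is harmonic and $|\rho|=\sqrt{2}|\tau|$. The hypothesis $|\tau|\ge\frac{\sqrt2}{2}\tau_0$ is then exactly $|\rho|\ge\tau_0$.

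We make the ansatz
\[
u=e^{\tau e\cdot x}\big(e^{i\tau\varsigma\cdot x}+r\big)=e^{\rho\cdot x}+e^{\tau e\cdot x}r .
\]
Substituting into $\Delta u+qu=0$, and using that $e^{\rho\cdot x}$ is harmonic, reduces the problem to the conjugated equation
\[
e^{-\tau e\cdot x}(\Delta+q)e^{\tau e\cdot x}r=-q\,e^{i\tau\varsigma\cdot x}=:f,\qquad \|f\|_{L^2(\Omega)}\le C\|q\|_{L^\infty}|\Omega|^{1/2}.
\]
So it suffices to solve $P_{\tau}r:=e^{-\tau e\cdot x}(\Delta+q)e^{\tau e\cdot x}r=f$ in the weak sense, with $\|r\|_{L^2}\le C|\rho|^{-1}\|f\|_{L^2}$.

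\textbf{Step 1 (Carleman estimate).} We prove the $L^2$ Carleman estimate
\[
\|\varphi\|_{L^2(\Omega)}\le \frac{C_0}{|\tau|}\,\big\|e^{\tau e\cdot x}\Delta\big(e^{-\tau e\cdot x}\varphi\big)\big\|_{L^2(\Omega)},\qquad \varphi\in C_c^\infty(\Omega).
\]
Expand $e^{\tau e\cdot x}\Delta e^{-\tau e\cdot x}=\Delta-2\tau e\cdot\nabla+\tau^2$, and split it into its formally self-adjoint part $A=\Delta+\tau^2$ and skew part $B=-2\tau e\cdot\nabla$. Then
\[
\|(A+B)\varphi\|^2=\|A\varphi\|^2+\|B\varphi\|^2+([A,B]\varphi,\varphi),
\]
and the commutator $[A,B]$ vanishes because the coefficients are constant. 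Hence $\|(A+B)\varphi\|\ge\|B\varphi\|=2|\tau|\,\|e\cdot\nabla\varphi\|$. A Poincaré inequality in the direction $e$ on the bounded domain, $\|\varphi\|\le\mathrm{diam}(\Omega)\,\|e\cdot\nabla\varphi\|$, then gives the estimate. Adding the potential, we get $\|e^{\tau e\cdot x}(\Delta+q)e^{-\tau e\cdot x}\varphi\|\ge(|\tau|/C_0-\|q\|_\infty)\|\varphi\|$. This is $\ge\frac{|\tau|}{2C_0}\|\varphi\|$ once $|\tau|\ge\frac{\sqrt2}{2}\tau_0$ with $\tau_0$ chosen large in terms of $\|q\|_{L^\infty}$ and $\Omega$. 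This fixes $\tau_0$.

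\textbf{Step 2 (Hahn--Banach duality).} The formal adjoint of $P_\tau$ is $P_{-\tau}^{*}=e^{\tau e\cdot x}(\Delta+\bar q)e^{-\tau e\cdot x}$, to which Step 1 applies. Define a linear functional on the subspace $\{P_{-\tau}^*\varphi:\varphi\in C_c^\infty(\Omega)\}\subset L^2(\Omega)$ by $\ell(P_{-\tau}^*\varphi)=(\varphi,f)$. By the Carleman estimate this is well defined, and it satisfies
\[
|\ell(P_{-\tau}^*\varphi)|\le\|\varphi\|\,\|f\|\le\frac{2C_0}{|\tau|}\|f\|\,\|P_{-\tau}^*\varphi\|.
\]
Extend $\ell$ by Hahn--Banach to all of $L^2$ with the same norm, and represent it via Riesz by some $r\in L^2(\Omega)$. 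Then $\|r\|\le\frac{2C_0}{|\tau|}\|f\|\le C/|\rho|$, and $(r,P_{-\tau}^*\varphi)=(f,\varphi)$ for all test functions; up to harmless complex-conjugation conventions, this says $P_\tau r=f$ in $\mathcal D'(\Omega)$.

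\textbf{Step 3 (regularity).} The function $u=e^{\rho\cdot x}+e^{\tau e\cdot x}r$ lies in $L^2$ and satisfies $\Delta u=-qu\in L^2$ in distributions. Interior elliptic regularity gives $u\in H^2_{loc}(\Omega)$. To obtain $u\in H^1(\Omega)$ up to the boundary, we run the whole construction on a slightly larger smooth domain $\widetilde\Omega\supset\overline\Omega$, with $\alpha$ extended to a bounded function, and then restrict to $\Omega$; the $L^2$ bound is preserved under restriction.

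The main obstacle is Step 1. Only the $L^2\to L^2$ Carleman bound with the gain $1/|\tau|$ is needed, since $q$ is merely bounded and the Hölder regularity of $\alpha$ plays no role. Absorbing $q$ is what forces $\tau_0$ to be large. Everything else is functional-analytic bookkeeping.
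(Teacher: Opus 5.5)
Your proposal is correct and follows essentially the same route as the paper. Both arguments use the conjugated operator split into a self-adjoint part $\Delta+\tau^2$ and a skew part $\mp 2\tau\,e\cdot\nabla$ with vanishing commutator, a Poincar\'e inequality in the direction $e$, absorption of $k^2(1+\alpha)$ for large $|\tau|$, and then Hahn--Banach plus Riesz representation to get $r$ with $\|r\|_{L^2(\Omega)}\le C|\tau|^{-1}\|f\|_{L^2(\Omega)}$. Your Step 3 is a correct addition the paper leaves unaddressed: running the construction on a larger domain and restricting to $\Omega$ gives $u\in H^1(\Omega)$, whereas the duality argument alone only yields $r\in L^2(\Omega)$.
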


Without loss of generality, by selecting an appropriate coordinate system, we take  $x=(x_{1},x^{'})\in\mathbb{R}^{n}$, $e=(1,0,\cdots,0)\in\mathbb{R}^{n}$ and then $u$ be the following form
\begin{equation}\label{CGOxs}
	u=e^{\tau x_{1}}(a(x^{'})+r)\in H^{1}(\Omega),
\end{equation}
where
\begin{equation}
	a(x^{'})=e^{i\tau\varsigma\cdot x}.
\end{equation}

Next, we define the operator
$$P_{\tau}: v\longmapsto e^{-\tau x_{1}}\Delta(e^{\tau x_{1}}v). $$
Then
$$\begin{aligned}
	\Delta u&=\Delta (e^{\tau x_{1}}(a(x^{'})+r))\\
	&=e^{\tau x_{1}}\cdot P_{\tau}(a(x^{'})+r),
\end{aligned}$$
and
$$\begin{aligned}
	\Delta u+k^2(1+\alpha(x))u&=e^{\tau x_{1}}\cdot P_{\tau}(a(x^{'})+r)+k^2(1+\alpha(x))u\\
	&=e^{\tau x_{1}}(P_{\tau}+k^2(1+\alpha(x)))(a(x^{'})+r).
\end{aligned}$$

Let $f=-(P_{\tau}+k^2(1+\alpha(x)))a(x^{'})$, then
 (\ref{CGOxs}) is the solution of $\Delta u+k^2(1+\alpha(x))u= 0\hspace*{0.1cm}, in\hspace*{0.1cm} \Omega$ if and only if $r$ is the solution of $(P_{\tau}+k^2(1+\alpha(x)))r=f\hspace*{0.1cm}, in\hspace*{0.1cm} \Omega$. In that way, to prove Lemma \ref{CGO} is equivalent to prove Proposition \ref{djCGO} as follows.

\begin{prop}\label{djCGO}
Let $\alpha(x)\in C^{\alpha}(\overline{\Omega})$, there are $C$, $\tau_{0}>0$, such that $\left|\tau\right|\geq\tau_{0}$. For any $f\in L^{2}(\Omega)$,
\begin{equation}
	(P_{\tau}+k^2(1+\alpha(x)))u=f\hspace*{0.1cm} in\hspace*{0.1cm} \Omega
\end{equation}
has a solution $u\in L^{2}(\Omega)$ satisfies
\begin{equation}
	\lVert u \rVert_{L^{2}(\Omega)}\leq\frac{C}{\left|\tau\right|}.
\end{equation}
\end{prop}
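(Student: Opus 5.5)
The plan is the standard duality (Hahn--Banach) argument built on a Carleman estimate for the conjugated Laplacian. The bound in the statement should read $\lVert u \rVert_{L^{2}(\Omega)}\leq \frac{C}{|\tau|}\lVert f \rVert_{L^{2}(\Omega)}$; it is this form, applied to $f=-(P_{\tau}+k^2(1+\alpha))a$, that later yields $\lVert r\rVert_{L^2}\le C/|\rho|$. Since $\Delta a=-\tau^2 a$ and $\tau^2 a$ is cancelled by the $\tau^2$ term of $P_\tau$ when $e\cdot\varsigma=0$, one checks that $\lVert f\rVert_{L^2}$ is bounded uniformly in $\tau$.

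\textbf{Step 1 (Carleman estimate).} Write
\[
P_{\tau}v=\Delta v+2\tau\partial_{x_1}v+\tau^{2}v .
\]
Since $\alpha$ is real, the formal $L^2$-adjoint of $P_\tau+k^2(1+\alpha)$ is
\[
P_{-\tau}+k^2(1+\alpha)=e^{\tau x_1}\Delta e^{-\tau x_1}+k^2(1+\alpha).
\]
I would prove that for all $w\in C_c^\infty(\Omega)$ and $|\tau|\ge\tau_0$,
\[
|\tau|\,\lVert w\rVert_{L^2}\le C\lVert P_{\pm\tau}w\rVert_{L^2}.
\]
This is the linear-weight estimate, obtained by splitting $P_{\pm\tau}$ into its symmetric part $\Delta+\tau^2$ and its antisymmetric part $\pm2\tau\partial_{x_1}$ and expanding $\lVert P_{\pm\tau}w\rVert^2$; a linear weight is only limiting Carleman, so one adds a small convexification. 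A cleaner route is to use the estimate of Kenig--Sj\"ostrand--Uhlmann / Dos Santos Ferreira--Kenig--Salo as quoted in \cite{Mikko2}, i.e.\ $\lVert w\rVert_{L^2}\le \frac{C}{|\tau|}\lVert e^{\pm\tau x_1}\Delta e^{\mp\tau x_1} w\rVert_{L^2}$, which follows from a Poincar\'e-type inequality in the $x_1$ direction. Here $\Omega$ bounded gives $\lVert\partial_{x_1}w\rVert\ge c\lVert w\rVert$.

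\textbf{Step 2 (absorbing the potential).} Because $\alpha\in C^\gamma(\overline\Omega)\subset L^\infty$,
\[
\lVert k^2(1+\alpha)w\rVert_{L^2}\le k^2(1+\lVert\alpha\rVert_\infty)\lVert w\rVert_{L^2}.
\]
Choosing $\tau_0$ so large that $C k^2(1+\lVert\alpha\rVert_\infty)\le \tau_0/2$, we get
\[
|\tau|\,\lVert w\rVert_{L^2}\le 2C\lVert (P_{-\tau}+k^2(1+\alpha))w\rVert_{L^2},\qquad w\in C_c^\infty(\Omega).
\]

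\textbf{Step 3 (Hahn--Banach).} Set $L^{*}=P_{-\tau}+k^2(1+\alpha)$ and let $D=\{L^{*}w: w\in C_c^\infty(\Omega)\}\subset L^2(\Omega)$. Define the functional $\ell(L^{*}w)=(w,f)_{L^2}$. It is well defined by the injectivity coming from Step 2, and it satisfies
\[
|\ell(L^{*}w)|\le \lVert f\rVert\,\lVert w\rVert\le \frac{2C}{|\tau|}\lVert f\rVert\,\lVert L^{*}w\rVert .
\]
Extend $\ell$ by Hahn--Banach to all of $L^2$ with the same norm. The Riesz representation then gives $u\in L^2$ with $(L^{*}w,u)=(w,f)$ for all $w$ and $\lVert u\rVert\le \frac{2C}{|\tau|}\lVert f\rVert$. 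This says exactly that $(P_\tau+k^2(1+\alpha))u=f$ in the sense of distributions.

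The main obstacle is Step 1: obtaining a Carleman estimate with the sharp gain $|\tau|$ for the linear weight, which requires the convexification trick or the Poincar\'e argument in $x_1$. The remaining steps are routine. Finally, $H^1$ regularity of $e^{\tau x_1}(a+r)$ follows from interior elliptic regularity, since $\Delta u=-k^2(1+\alpha)u\in L^2$.
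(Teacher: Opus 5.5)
Your proposal is correct and follows the paper's proof essentially step for step. The paper uses the commuting splitting $P_{-\tau}=A+iB$ with $A=\Delta+\tau^2$ and $B=2i\tau\partial_{1}$, which gives $\lVert P_{-\tau}w\rVert_{L^2(\Omega)}\geq 2|\tau|\lVert\partial_1 w\rVert_{L^2(\Omega)}$; it then applies the Poincar\'e inequality in $x_1$, absorbs $k^2(1+\alpha)$ for large $|\tau|$, and concludes with the Hahn--Banach/Riesz duality argument. You are also right that the bound should read $\lVert u\rVert_{L^2(\Omega)}\leq \frac{C}{|\tau|}\lVert f\rVert_{L^2(\Omega)}$, which is what the argument actually yields.

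Two minor remarks. Since $A$ and $B$ commute exactly, no convexification is needed. Your closing $H^1$ comment, which concerns Lemma \ref{CGO} rather than this proposition, requires building the solution on a slightly larger domain, because interior regularity alone only gives $H^2_{\mathrm{loc}}(\Omega)$.
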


\begin{prop}\label{CGOgj}
Let $\alpha(x)\in C^{\alpha}(\overline{\Omega})$, there are $C$, $\tau_{0}>0$, such that $\left|\tau\right|\geq\tau_{0}. $ And we have the following estimate
\begin{equation}
	\lVert u \rVert_{L^{2}(\Omega)}\leq\frac{C}{\left|\tau\right|}\lVert (P_{-\tau}+k^2(1+\alpha(x)))u \rVert_{L^{2}(\Omega)},\hspace*{0.2cm}u\in C^{\infty}_{c}(\Omega).
\end{equation}
\end{prop}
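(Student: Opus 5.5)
The estimate is a Carleman estimate with the linear weight $e^{-\tau x_1}$. The plan is to prove it first for the free conjugated Laplacian $P_{-\tau}$, where it follows by integration by parts (or Fourier analysis). The zeroth-order term $k^2(1+\alpha)$ is then absorbed as a bounded perturbation once $|\tau|$ is large.

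\textbf{Step 1: compute the conjugated operator.} For $u\in C^\infty_c(\Omega)$,
\begin{equation*}
P_{-\tau}u=e^{\tau x_1}\Delta(e^{-\tau x_1}u)=\Delta u-2\tau\partial_{x_1}u+\tau^2u .
\end{equation*}
We split this into $A=\Delta+\tau^2$, which is formally self-adjoint, and $B=-2\tau\partial_{x_1}$, which is formally skew-adjoint.

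\textbf{Step 2: expand the square.} Since $u$ has compact support, all boundary terms vanish and
\begin{equation*}
\|P_{-\tau}u\|_{L^2}^2=\|Au\|^2+\|Bu\|^2+([A,B]u,u)=\|Au\|^2+4\tau^2\|\partial_{x_1}u\|^2 ,
\end{equation*}
because $A$ and $B$ have constant coefficients and therefore commute. This holds for complex-valued $u$, taking real parts of the cross term. It follows that $\|P_{-\tau}u\|\ge 2|\tau|\,\|\partial_{x_1}u\|$.

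\textbf{Step 3: Poincar\'e in one direction.} Since $\Omega$ is bounded, it lies in a slab $\{a<x_1<b\}$. The one-dimensional Poincar\'e inequality for compactly supported functions in that slab gives $\|u\|_{L^2}\le (b-a)\|\partial_{x_1}u\|_{L^2}$. Combining with Step 2,
\begin{equation*}
\|u\|_{L^2}\le \frac{C_0}{|\tau|}\|P_{-\tau}u\|_{L^2},\qquad C_0=\frac{b-a}{2}.
\end{equation*}
As an alternative to Steps 2 and 3, one can use the symbol $-|\xi|^2+\tau^2-2i\tau\xi_1$. Its imaginary part is $2\tau\xi_1$ and its real part controls the remainder. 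This gives the same bound after localizing, for instance by extending $u$ by zero and using the slab width. The integration-by-parts route is the cleaner one.

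\textbf{Step 4: perturbation.} The coefficient $q=k^2(1+\alpha)$ lies in $C^\gamma(\overline\Omega)\subset L^\infty$, so $\|qu\|\le M\|u\|$ with $M=k^2(1+\|\alpha\|_{L^\infty})$. Then
\begin{equation*}
\|u\|\le \frac{C_0}{|\tau|}\bigl(\|(P_{-\tau}+q)u\|+M\|u\|\bigr).
\end{equation*}
Choose $\tau_0=2C_0M$. For $|\tau|\ge\tau_0$ the term $\frac{C_0M}{|\tau|}\|u\|\le\frac12\|u\|$ can be absorbed into the left-hand side, which yields the claim with $C=2C_0$.

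\textbf{Where the difficulty lies.} The main point requiring care is Step 2: checking that the cross terms really reduce to a nonnegative quantity, namely zero plus the $\|Bu\|^2$ contribution, for complex-valued $u$. If one instead uses the symbol approach, the difficulty becomes justifying the lower bound $|p(\xi)|\gtrsim|\tau|$ on the relevant frequencies. The integration-by-parts argument avoids this entirely.

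Note that Proposition \ref{djCGO} will then follow from this estimate by duality (Hahn--Banach), with $P_\tau$ being the formal adjoint of $P_{-\tau}$.
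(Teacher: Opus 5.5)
Your proposal is correct and follows the same route as the paper. Write $P_{-\tau}=(\Delta+\tau^2)-2\tau\partial_{x_1}$, use that the two pieces commute to get $\|P_{-\tau}u\|_{L^2}\ge 2|\tau|\,\|\partial_{x_1}u\|_{L^2}$, apply a one-dimensional Poincar\'e inequality across a slab $\{a<x_1<b\}\supset\Omega$, and absorb $k^2(1+\alpha)u$ once $|\tau|\ge 2C_0\|k^2(1+\alpha)\|_{L^\infty}$. Your Poincar\'e step (square the pointwise bound, integrate in $x_1$ and then in $x'$ to get $\|u\|\le(b-a)\|\partial_{x_1}u\|$) is in fact more careful than the paper's, which bounds the fixed-$x'$ line integral $\int_a^b|\partial_1u(\theta,x')|^2\,d\theta$ directly by $\|\partial_1 u\|_{L^2(\Omega)}^2$.
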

\begin{proof}
Based on the definition of $P_{\tau}$, it follows that
	$$\begin{aligned}
		P_{\tau}v&=e^{-\tau x_{1}}\Delta(e^{\tau x_{1}}v)\\
		&=(\Delta+\tau^2+2\tau\partial_{1})v.
	\end{aligned}$$
Note $P_{-\tau}v=e^{\tau x_{1}}\Delta(e^{-\tau x_{1}}v)$, then
	$$\begin{aligned}
		P_{-\tau}&=\Delta+\tau^2-2\tau\partial_{1}\\
		&=A+iB,
	\end{aligned}$$
where $A=\Delta+\tau^2, B=2i\tau\partial_{1}$.
	
For any $u\in C^{\infty}_{c}(\Omega)$, there are
	$$\begin{aligned}
		\lVert P_{-\tau}u \rVert^2_{L^2(\Omega)}
		&=(P_{-\tau}u, P_{-\tau}u)_{L^2(\Omega)}\\
		&=((A+iB)u, (A+iB)u)_{L^2(\Omega)}\\
		&=(Au, Au)_{L^2(\Omega)}+(Bu, Bu)_{L^2(\Omega)}+i(Bu, Au)_{L^2(\Omega)}-i(Au, Bu)_{L^2(\Omega)}\\
		&=\lVert Au \rVert^2_{L^2(\Omega)}+\lVert Bu \rVert^2_{L^2(\Omega)}+i(ABu, u)_{L^2(\Omega)}-i(BAu, u)_{L^2(\Omega)}\\
		&=\lVert Au \rVert^2_{L^2(\Omega)}+\lVert Bu \rVert^2_{L^2(\Omega)}+(i(AB-BA)u, u)_{L^2(\Omega)}.
	\end{aligned}$$
It is easy to see $AB=BA$, therefore
	$$\begin{aligned}
		\lVert P_{-\tau}u \rVert_{L^2(\Omega)}
		&\geq \lVert Bu \rVert_{L^2(\Omega)}\\
		&=2\left| \tau \right| \lVert \partial_{1}u \rVert_{L^2(\Omega)}.
	\end{aligned}$$

As $u\in C^{\infty}_{c}(\Omega)$, there are $a$, $b\in\mathbb{R}$ such that $supp(u)\subset\Omega\subset\{a<x_{1}<b\}$. For any $x\in\Omega$, we have
$$\int_{a}^{x_{1}}\partial_{1}u(\theta,x')d\theta=u(x_{1},x'). $$
By H\"{o}der inequality, there are
$$\begin{aligned}
	\lvert u(x_{1},x')\lvert
	&=\lvert\int_{a}^{x_{1}}\partial_{1} u(\theta,x') \, d\theta\lvert \\[2mm]
	&\leq \int_{a}^{x_{1}}\lvert \partial_{1} u(\theta,x')\lvert \, d\theta\\[2mm]
	&\leq (b-a)^{\tfrac{1}{2}}\lVert \partial_{1} u(x)\rVert_{L^2(\Omega)}.
\end{aligned}$$
Square both sides of the inequality simultaneously and integrate on $\Omega$, then
\begin{equation*}
	\int_{\Omega} \lvert u(x_{1},x')\lvert^{2} \,dx
	\leq (b-a)\lvert \Omega\rvert \lVert \partial_{1} u(x)\rVert_{L^2(\Omega)}^2.
\end{equation*}
Therefore, there are $C_{0}>0$, such that
	$$\lVert u \rVert_{L^{2}(\Omega)}\leq C_{0}\lVert \partial_{1}u \rVert_{L^{2}(\Omega)}, $$
then
	$$\begin{aligned}
		\lVert u \rVert_{L^{2}(\Omega)}
		&\leq \frac{C_{0}}{2\left| \tau\right|}\lVert P_{-\tau}u \rVert_{L^2(\Omega)}\\
		&\leq\frac{C_{0}}{2\left| \tau\right|}(\lVert P_{-\tau}u+k^2(1+\alpha(x))u \rVert_{L^2(\Omega)}+ \lVert-k^2(1+\alpha(x))u\rVert_{L^2(\Omega)}).
	\end{aligned}$$
Suppose $\left|\tau\right|\geq 2C_{0}\lVert k^2(1+\alpha(x)))\rVert_{L^{\infty}(\Omega)}$, there are $C>0$ such that
	$$\lVert u \rVert_{L^{2}(\Omega)}\leq\frac{C}{\left|\tau\right|}\lVert (P_{-\tau}+k^2(1+\alpha(x)))u \rVert_{L^{2}(\Omega)}. $$
\end{proof}

Now we finish the proof of Proposition \ref{djCGO}.

\textbf{The proof of Proposition \ref{djCGO}:}
Define $X=(P_{-\tau}+k^2(1+\alpha(x)))C^{\infty}_{c}(\Omega)$, $X$ is a subspace of $L^2(\Omega)$. Take $f\in L^{2}(\Omega)$ arbitrarily and define a linear functional
\begin{center}
	$\phi:X \rightarrow \mathbb{C}$,
\end{center}
$$\phi((P_{-\tau}+k^2(1+\alpha(x)))\psi)= \displaystyle\int_{\Omega}\overline{f}\psi dx. $$
As indicated by Proposition \ref{CGOgj}, $(P_{-\tau}+k^2(1+\alpha(x)))\psi=0$ if and only if $\psi =0$, then $(P_{-\tau}+k^2(1+\alpha(x)))$ is injection, for each $(P_{-\tau}+k^2(1+\alpha(x)))\psi\in X$ has a unique $\psi\in C^{\infty}_{c}(\Omega)$ corresponding to it. Therefore, $\phi$ is well-defined.

From Proposition \ref{CGOgj} again and the Schwarz inequality, we have
$$\begin{aligned}
	\left| \phi((P_{-\tau}+k^2(1+\alpha(x)))\psi)\right|
	&=\left|\displaystyle\int_{\Omega}f\overline{\psi} dx\right|\\
	&=\left| (\psi,f)_{L^{2}(\Omega)} \right|\\
	&\leq\lVert f\rVert_{L^2(\Omega)}\dot\lVert\psi\rVert_{L^2(\Omega)} \\
	&\leq\frac{C}{\left| \tau\right|}\lVert f\rVert_{L^2(\Omega)}
	\lVert (P_{-\tau}+k^2(1+\alpha(x)))\psi \rVert_{L^2(\Omega)}.
\end{aligned}$$
It can be seen that $\phi$ is a bounded linear functional on $X$, so $\phi$ is also a continuous linear functional on $X$.

Since $X\subset L^{2}(\Omega)$, $\phi$ is a continuous and linear functional on $X$, by Hahn-Banach theorem,
$\phi$ can be extended to a continuous and linear functional $\Phi$ in $L^2(\Omega)$, and
$$\lVert\Phi\rVert_{L^2(\Omega)}\leq \frac{C}{|\tau|}\lVert f \rVert_{L^2(\Omega)}. $$
By Riesz Representation Theorem, there exists a unique $u$ $\in L^2(\Omega)$ such that for any $\omega\in L^{2}(\Omega)$, there are
$$\begin{aligned}
	\Phi(\omega)
	&=(\omega,u)_{L^{2}(\Omega)}\\
	&=\int_{\Omega}\omega\overline{u} dx,
\end{aligned}$$
and
$$\lVert u\rVert_{L^2(\Omega)}=\lVert \Phi\rVert_{L^2(\Omega)}\leq \frac{C}{|\tau|}\lVert f\rVert_{L^2(\Omega)}. $$

As $\psi \in C^{\infty}_{c}(\Omega)$, by Green formula
$$\begin{aligned}
	(\psi,(P_{\tau}+k^2(1+\alpha(x)))u)_{L^2(\Omega)}
	&=((P_{-\tau}+k^2(1+\alpha(x)))\psi,u)_{L^2(\Omega)}\\
	&=\Phi(P_{-\tau}+k^2(1+\alpha(x))\psi)\\
	&=\phi(P_{-\tau}+k^2(1+\alpha(x))\psi)\\
	&=\int_{\Omega}\overline{f}\psi dx\\
	&=(\psi,f)_{L^2(\Omega)}.
\end{aligned}$$
Therefore, $(P_{\tau}+k^2(1+\alpha(x)))u=f$ has a solution $u\in L^2(\Omega)$ and
$$\lVert u\rVert_{L^2(\Omega)}\leq \frac{C}{|\tau|}. $$
\hfill\textbf{$\square$}

So far, we have completed the proof of Lemma \ref{CGO} by proving Proposition \ref{djCGO} and Proposition \ref{CGOgj}.

\begin{lem}\label{Runge}
Suppose that $k(x)\in C^{\gamma}(\overline{\Omega})$. Then for any solution $V\in H^1(\Omega)$ to
$\Delta V+k(x)V=0\ \mathrm{in}\ \Omega$, there exist a solution $U\in C^{2, \gamma}(\overline{\Omega})$ to
$\Delta U+k(x)U=0\ \mathrm{in}\ \Omega,$
such that  for any $\eta>0$
\begin{align*}
\parallel V-U\parallel_{L^2(\Omega)}<\eta.
\end{align*}
\end{lem}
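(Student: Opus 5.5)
The plan is to prove the density statement by duality (Hahn--Banach), with unique continuation doing the main work. Let $S=\{U\in C^{2,\gamma}(\overline\Omega):\Delta U+kU=0 \text{ in }\Omega\}$ and let $W=\{V\in H^1(\Omega):\Delta V+kV=0\}$. It suffices to show that the $L^2(\Omega)$-closure of $S$ contains $W$. By Hahn--Banach, this reduces to the following: if $f\in L^2(\Omega)$ satisfies $\int_\Omega f\,\overline U\,dx=0$ for all $U\in S$, then $\int_\Omega f\,\overline V\,dx=0$ for all $V\in W$. We will also assume, as in the setting of \eqref{xxfwt}, that $0$ is not a Neumann eigenvalue of $\Delta+k(x)$, so that the Neumann problem is uniquely solvable. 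If needed, we first perturb the domain slightly to arrange this.

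\textbf{Step 1: the adjoint problem.} Since $k$ is real-valued, the operator is self-adjoint, and we can solve $\Delta w+kw=f$ in $\Omega$ with $\partial_\nu w=0$ on $\partial\Omega$. This gives a solution $w\in H^2(\Omega)$ by elliptic regularity for $L^2$ data. For any Neumann datum $g\in C^{1,\gamma}(\partial\Omega)$, the solution $U_g$ of $\Delta U+kU=0$, $\partial_\nu U=g$, lies in $C^{2,\gamma}(\overline\Omega)$ by the solvability result used in Theorem \ref{sdx} (i.e. \cite[Lemma 2.2]{Mikko1}), so $U_g\in S$. Green's formula then gives
\[
0=\int_\Omega f\,\overline{U_g}\,dx=\int_\Omega(\Delta w+kw)\overline{U_g}\,dx=\int_{\partial\Omega}\big(\partial_\nu w\,\overline{U_g}-w\,\overline g\big)\,dS=-\int_{\partial\Omega}w\,\overline g\,dS .
\]
Because $C^{1,\gamma}(\partial\Omega)$ is dense in $L^2(\partial\Omega)$, this forces $w|_{\partial\Omega}=0$.

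\textbf{Step 2: conclusion.} Now take $V\in W$. Applying Green's formula once more, and using that $w$ has vanishing Cauchy data on $\partial\Omega$, we get
\[
\int_\Omega f\,\overline V\,dx=\int_\Omega(\Delta w+kw)\overline V\,dx=\int_\Omega w\,\overline{(\Delta V+kV)}\,dx+\int_{\partial\Omega}\big(\partial_\nu w\,\overline V-w\,\partial_\nu\overline V\big)\,dS=0 .
\]
Here the boundary pairing is interpreted in the $H^{1/2}$--$H^{-1/2}$ duality, which is justified since $\Delta V=-kV\in L^2$ gives $\partial_\nu V\in H^{-1/2}(\partial\Omega)$. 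Hence $f\perp W$, and the density claim follows. We never actually need unique continuation here, because the Cauchy data of $w$ vanish directly.

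\textbf{Main obstacle.} The delicate point is the Neumann eigenvalue issue: Step 1 requires unique solvability of the Neumann problem for $\Delta+k$. If $0$ is an eigenvalue, the plan is to replace the Neumann problem by a Robin or Dirichlet problem for which $0$ is not an eigenvalue; such a choice exists because the spectra shift. In the Dirichlet version one solves $\Delta w+kw=f$ with $w=0$ on $\partial\Omega$, and Step 1 then yields $\partial_\nu w=0$ instead. The rest of the argument is unchanged.

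A secondary technicality is the justification of Green's formula for $V\in H^1$ only. The route is to approximate, or equivalently to use the weak formulation with $w\in H^2$ having zero trace and zero normal derivative, so that $w$ extends by zero to an $H^2$ function. This makes $\int_\Omega(\Delta w+kw)\overline V\,dx=\langle w,\overline{\Delta V+kV}\rangle=0$ legitimate.
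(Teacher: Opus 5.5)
Your proof is correct and follows the paper's proof: the same duality reduction, the same adjoint Neumann problem $\Delta w+kw=f$, $\partial_\nu w=0$, Green's formula against the $C^{2,\gamma}$ Neumann solutions to get $w|_{\partial\Omega}=0$, and Green's formula again against $V\in H^1$. Your version is tighter than the paper's in two places. You let the Neumann data range over all of $C^{1,\gamma}(\partial\Omega)$, which is dense in $L^2(\partial\Omega)$, instead of the paper's non-conclusive ``take $U$ with $\partial_\nu\overline U\neq0$''; and you justify the last Green identity through $w\in H^2$ with vanishing Cauchy data. Your Dirichlet/Robin fallback for the case where $0$ is a Neumann eigenvalue is only sketched, but the paper does not cover that case either, since it assumes $0$ is not a Neumann eigenvalue.
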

\begin{proof}
Define
 $$X=\{ U\in C^{2, \gamma}(\overline{\Omega})|\ U\ is\ a\ solution\ to\ \Delta U+k(x)U=0\ \mathrm{in}\ \Omega\},$$
and $$Y=\{ V\in H^{1}({\Omega})|\ V\ is\ a \ solution\ to\ \Delta V+k(x)V=0\ \mathrm{in}\ \Omega\}.$$
It is easy to see $X$ and $Y$ is not empty and $X\subseteq Y$.

To prove Lemma 3.6, as long as to prove that $X$ is dense in $Y$. Therefore, we only need to prove that for a fixed $y \in L^{2}(\Omega)$, if
$$(y,U)_{L^2(\Omega)}=0,\hspace{5mm} \forall U\in X, $$
then
$$(y,V)_{L^2(\Omega)}=0,\hspace{5mm} \forall V\in Y. $$

Now we consider the following equation
\begin{align}\label{Runge eq}
	\begin{cases}
		\displaystyle \Delta W+k(x)W=y\ &in \, \Omega,\\
		\displaystyle \frac{\partial W}{\partial \nu}=0\ &on \, \partial\Omega,
	\end{cases}
\end{align}
by \cite[Lemma 2.2]{Mikko1}, we konw that (\ref{Runge eq}) have a uniqueness solution $W\in H^1(\Omega)$.

For any $U\in X$, we find the fixed $y\in L^2(\Omega)$ such that
$$\begin{aligned}
	0=\int_{\Omega}y\overline{U} dx
	&=\int_{\Omega}(\Delta W+k(x)W)\overline{U} dx\\
	&=\int_{\Omega}\overline{U}\Delta W dx-\int_{\Omega}W\Delta \overline{U} dx\\
	&=-\int_{\partial\Omega}W \partial_{\nu}\overline{U} dS
\end{aligned}$$
	
Since for any $U\in X$ the above equation holds, we can take $U\in X$ such that $\partial_{\nu}\overline{U}\neq 0$, then we will see that $W\equiv 0 \hspace{2mm} on \, \partial\Omega$.

Therefore, for any $V\in Y$, we have
$$\begin{aligned}
	\int_{\Omega}y\overline{V} dx
	&=\int_{\Omega}(\Delta W+k(x)W)\overline{V} dx\\
	&=\int_{\Omega}\overline{V}\Delta W dx-\int_{\Omega}W\Delta \overline{V} dx\\
	&=-\int_{\partial\Omega}W \partial_{\nu}\overline{V} dS\\
	&=0.
\end{aligned}$$
\end{proof}

With the above theoretical foundation, we can now complete the proof of Theorem \ref{xxfwtdl} for $n\geq3$.

\textbf{The proof of Theorem \ref{xxfwtdl} for $\bf{n\geq3}$:}

Firstly, fix a vector $\xi\in\mathbb{R}^n$ and use assumption $n\geq3$ to find some unit vectors $\alpha$, $\gamma\in\mathbb{R}^n$ such that
$\{$$\sigma$, $\vartheta$, $\xi$ $\}$ is an orthogonal set.

Next, for $\tau\geq|\xi|$, define two complex vectors
$$\rho_{1}=\tau\sigma+i[\xi+\sqrt{\tau^2-|\xi|^2}\vartheta], $$
$$\rho_{2}=-\tau\sigma+i[\xi-\sqrt{\tau^2-|\xi|^2}\vartheta], $$
where $|Re(\rho_{j})|=|Im(\rho_{j})|=\tau$, $Re(\rho_{j})\perp Im(\rho_{j})$.

By Lemma \ref{CGO}, if $\tau\geq\tau_{0}$ is large enough,
then
$$\Delta u+k^2(1+\alpha_{j}(x))u= 0\hspace*{0.1cm},\ \text{in}\ \Omega,\hspace*{0.1cm} j=1,2, $$
have the solution
$$u_{j}=e^{Re(\rho_{j})\cdot x}(e^{iIm(\rho_{j})\cdot x}+r_{j}),$$
when $\tau\rightarrow \infty$,  $\lVert r_{j}\rVert_{L^2(\Omega)}\rightarrow 0$.

By Lemma \ref{Runge}, there exist $\{\hat{u}_{1n}\}$, $\{\hat{u}_{2n}\}\in C^{2,\alpha}(\overline \Omega)$, which satisfy
$$\Delta \hat{u}_{jn}+k^2(1+\alpha_{j}(x))\hat{u}_{jn}= 0\hspace*{0.1cm},\ \text{in}\ \Omega,\hspace*{0.1cm} j=1,2. $$
such that $$\lim_{n\rightarrow\infty}\lVert\hat{u}_{1n}-u_{1}\rVert_{L^2(\Omega)}=0, $$
$$\lim_{n\rightarrow\infty}\lVert\hat{u}_{2n}-u_{2}\rVert_{L^2(\Omega)}=0. $$
Because of $N_{\alpha_{1}}=N_{\alpha_{2}}$, by proposition \ref{mt1}  $$\int_{{\Omega}}-k^2(\alpha_{1}(x)-\alpha_{2}(x))\hat{u}_{1n}\hat{u}_{2n} dx=0, $$
we have
$$\begin{aligned}
	\lim_{n\rightarrow\infty}\int_{{\Omega}}-k^2(\alpha_{1}(x)-\alpha_{2}(x))\hat{u}_{1n}\hat{u}_{2n} dx
	&=\int_{{\Omega}}-k^2(\alpha_{1}(x)-\alpha_{2}(x)){u}_{1} {u}_{2} dx\\
	&=0.
\end{aligned}$$
Then
$$\int_{{\Omega}}-k^2(\alpha_{1}(x)-\alpha_{2}(x))e^{2ix\cdot\xi} dx$$
$$=\int_{{\Omega}}-k^2(\alpha_{1}(x)-\alpha_{2}(x))(e^{iIm(\rho_{1})\cdot x}r_{2}+e^{iIm(\rho_{2})\cdot x}r_{1}+r_{1} r_{2}) dx, \hspace*{0.1cm}\forall \xi \in \mathbb{R}^n. $$
Since $|e^{iIm(\rho_{j})\cdot x}|=1$, $\tau\rightarrow \infty$, $r_{1}\rightarrow 0$, $r_{2}\rightarrow 0$. When $\tau\rightarrow \infty$,  $$\int_{{\Omega}}-k^2(\alpha_{1}(x)-\alpha_{2}(x))e^{2ix\cdot\xi} dx=0,\ \forall\ \xi \in \mathbb{R}^n.$$
Therefore, $$\alpha_{1}(x)=\alpha_{2}(x). $$
\hfill\textbf{$\square$}

\section{Multiparameter determination in the semilinear He-lmholtz equation}\label{sec4}

From Section \ref{sec3}, we can obtain the following density results respectively.
\begin{prop}\label{cmx>2}
Let $\Omega\in \mathbb{R}^n$, $n\geq3$, be a connected bounded domain with $C^{\infty}$ boundary. If exist $f\in C^{\gamma}(\overline{\Omega})$, $0<\gamma<1$, such that
	$$\int_{{\Omega}} f u_{1} u_{2} \, dx=0 $$
for any $u_{1}$, $u_{2}\in C^{2,\gamma}(\overline{\Omega})$ satisfying $\Delta u+k^2(1+\alpha(x))u= 0\hspace*{0.1cm} in \,\Omega, $ then $f=0$ in $\Omega$.
\end{prop}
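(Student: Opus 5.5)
The plan is to repeat the argument from the proof of Theorem \ref{xxfwtdl} for $n\geq3$, with the single potential $\alpha$ in place of the pair $\alpha_1,\alpha_2$ and with the given $f\in C^{\gamma}(\overline{\Omega})$ in place of $-k^2(\alpha_1-\alpha_2)$. We aim to show that the Fourier transform of $f\chi_\Omega$ vanishes identically. Then $f=0$ a.e. in $\Omega$, and hence everywhere in $\Omega$ by continuity.

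First we build CGO solutions. Fix $\xi\in\mathbb{R}^n$. Since $n\geq3$, we can choose unit vectors $\sigma,\vartheta$ such that $\{\sigma,\vartheta,\xi\}$ is orthogonal. For $\tau\geq|\xi|$ set
$$\rho_1=\tau\sigma+i\bigl(\xi+\sqrt{\tau^2-|\xi|^2}\,\vartheta\bigr),\qquad \rho_2=-\tau\sigma+i\bigl(\xi-\sqrt{\tau^2-|\xi|^2}\,\vartheta\bigr).$$
Each $\rho_j$ has real and imaginary parts that are orthogonal and of equal length $\tau$, and $\rho_1+\rho_2=2i\xi$. Lemma \ref{CGO}, applied with the single coefficient $\alpha$, gives solutions $u_j=e^{\rho_j\cdot x}(1+\tilde r_j)\in H^1(\Omega)$ of $\Delta u+k^2(1+\alpha)u=0$. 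Here $\tilde r_j=e^{-i\,\mathrm{Im}\rho_j\cdot x}r_j$, so $\|\tilde r_j\|_{L^2}=\|r_j\|_{L^2}\leq C/\tau$.

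The identity $\int_\Omega fu_1u_2\,dx=0$ holds only for $C^{2,\gamma}$ solutions, so we transfer it using Lemma \ref{Runge}. That lemma gives sequences $\hat u_{jm}\in C^{2,\gamma}(\overline\Omega)$ of solutions with $\hat u_{jm}\to u_j$ in $L^2(\Omega)$. Since $f\in L^\infty$, we can write
$$\int f(u_1u_2-\hat u_{1m}\hat u_{2m})=\int f(u_1-\hat u_{1m})u_2+\int f\hat u_{1m}(u_2-\hat u_{2m}).$$
Cauchy--Schwarz bounds this by a constant times $o(1)$, because $\|\hat u_{1m}\|_{L^2}$ stays bounded. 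Hence $\int_\Omega fu_1u_2\,dx=0$ for the CGO solutions themselves.

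Next we expand the product. Since the real parts cancel,
$$u_1u_2=e^{2i\xi\cdot x}(1+\tilde r_1+\tilde r_2+\tilde r_1\tilde r_2).$$
This gives
$$\Bigl|\int_\Omega f e^{2i\xi\cdot x}dx\Bigr|\leq\|f\|_{L^\infty}\bigl(|\Omega|^{1/2}(\|\tilde r_1\|+\|\tilde r_2\|)+\|\tilde r_1\|\|\tilde r_2\|\bigr)\leq C/\tau.$$
Letting $\tau\to\infty$ yields $\widehat{f\chi_\Omega}(-2\xi)=0$ for every $\xi$. Fourier uniqueness then gives $f=0$.

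The main obstacle is the density step. Lemma \ref{Runge} is stated with $L^2$ approximation of $H^1$ solutions, and its proof is somewhat informal: it deduces $W=0$ on $\partial\Omega$ by choosing a single $U$ with $\partial_\nu U\neq0$. If that step needs repair, I would argue as follows. By the solvability result of \cite[Lemma 2.2]{Mikko1}, the Neumann data $\partial_\nu U$ of $C^{2,\gamma}$ solutions range over all of $C^{1,\gamma}(\partial\Omega)$. So $\int_{\partial\Omega}W\,\partial_\nu\overline U\,dS=0$ for all such $U$ forces $W|_{\partial\Omega}=0$. Then Green's formula gives $(y,V)=0$ for $V\in Y$, once the Green identity is justified for $H^1$ solutions in the weak sense.

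The CGO construction itself is already provided by Lemma \ref{CGO}, so the remaining work is the limit bookkeeping above.
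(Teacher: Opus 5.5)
Your proposal is correct and follows the same route the paper indicates for this proposition, namely rerunning the $n\geq3$ proof of Theorem~\ref{xxfwtdl}: CGO solutions from Lemma~\ref{CGO} with $\rho_1+\rho_2=2i\xi$, Runge approximation via Lemma~\ref{Runge}, and letting $\tau\to\infty$ so that the Fourier transform of $f\chi_\Omega$ vanishes. Your explicit limit bookkeeping and your repair of Lemma~\ref{Runge} fill in details the paper leaves informal; the repair uses that the Neumann data of $C^{2,\gamma}$ solutions exhaust $C^{1,\gamma}(\partial\Omega)$, which forces $W|_{\partial\Omega}=0$.
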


\begin{prop}\label{cmx2}
Let $\Omega\in \mathbb{R}^2$, be a connected bounded domain with $C^{\infty}$ boundary. If exist $f\in W^{1,p}(\Omega)$, $p>2$ such that
$$\int_{{\Omega}} f u_{1} u_{2} \, dx=0 $$
for any $u_{1}$, $u_{2}\in C^{2,\gamma}(\overline{\Omega})$ satisfying $\Delta u+k^2(1+\alpha(x))u= 0\hspace*{0.1cm} in \,\Omega$, then $f=0$ in $\Omega$. 	
\end{prop}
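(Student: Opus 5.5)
The statement to prove is Proposition \ref{cmx2}, the two-dimensional density result. I would not try to build CGO solutions by hand as in Lemma \ref{CGO}. In the plane the construction used for $n\ge3$ fails: there is no room for three mutually orthogonal vectors $\sigma,\vartheta,\xi$ with $\xi$ arbitrary. Instead I would reduce the claim to the two-dimensional uniqueness result \cite[Theorem 1]{2D2015}, which the paper already uses for the linear problem.

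\textbf{Step 1: invoke the Bukhgeim-type result.} The key input from \cite{2D2015} is that, for Schrödinger-type operators $\Delta+q$ with $q$ of Sobolev regularity $W^{1,p}$, $p>2$, products of solutions are dense in the following sense. If $f\in W^{1,p}(\Omega)$ satisfies $\int_\Omega f u_1u_2\,dx=0$ for all $H^1$ (or $H^2$) solutions $u_1,u_2$ of $(\Delta+q)u=0$, then $f\equiv0$. This is exactly the identity that the proof of \cite[Theorem 1]{2D2015} extracts from equal boundary maps. It uses Bukhgeim's CGO solutions $u=e^{\Phi/h}(a+r_h)$ with a holomorphic phase $\Phi$ having a nondegenerate critical point at a chosen $x_0$, followed by stationary phase in $h\to0$. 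This recovers $f(x_0)$, and the $W^{1,p}$ regularity of $f$ is what controls the remainder term. I would state this as the black-box ingredient, applied with $q=k^2(1+\alpha)$, which lies in $W^{1,p}$ under the standing hypothesis of Theorem \ref{dl2}.

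\textbf{Step 2: upgrade from $C^{2,\gamma}$ solutions to $H^1$ solutions.} Given that the orthogonality holds for all $u_1,u_2\in C^{2,\gamma}(\overline\Omega)$ solving the equation, I would use Lemma \ref{Runge} with $k(x)=k^2(1+\alpha(x))$. This needs $\alpha\in C^\gamma$, which follows from $W^{1,p}\hookrightarrow C^{\gamma}$ with $\gamma\le 1-2/p$. Take any $H^1$ solutions $V_1,V_2$ and approximating sequences $U_{1m}\to V_1$, $U_{2m}\to V_2$ in $L^2(\Omega)$. Then
\[
\int_\Omega fU_{1m}U_{2m}-\int_\Omega fV_1V_2=\int_\Omega f(U_{1m}-V_1)U_{2m}+\int_\Omega fV_1(U_{2m}-V_2).
\]
Since $f\in L^\infty$ and $\|U_{2m}\|_{L^2}$ is bounded, Cauchy–Schwarz sends both terms to $0$. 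Hence $\int_\Omega fV_1V_2=0$ for all $H^1$ solutions. The CGO solutions of \cite{2D2015} lie in $H^1$, indeed $H^2$ by interior elliptic regularity, so Step 1 applies and gives $f=0$.

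\textbf{Main obstacle.} The delicate point is making sure the hypotheses of \cite{2D2015} match the present situation. That paper is phrased via Cauchy data sets and the Dirichlet problem, not the Neumann-to-Dirichlet map, and its CGO remainder estimates require $q\in W^{1,p}$. So I would check that the integral identity in the cited proof is derived only from the orthogonality relation, which is exactly our hypothesis. If the citation only gives the Cauchy-data formulation, the fallback is to reproduce the Bukhgeim argument. In that case the hard estimate is showing that the remainder contributes $o(h)$ after stationary phase, which is where $f,q\in W^{1,p}$ with $p>2$ is used.
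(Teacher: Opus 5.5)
Your proposal is correct and follows the same route as the paper. The paper's justification is only the remark that the result follows from the proof of \cite[Theorem 1]{2D2015} together with a Runge-type approximation as in Lemma \ref{Runge}. Your Step 2 writes out the $L^2$-limit passage that the paper leaves implicit: it uses $f\in L^\infty$ and $\alpha\in C^{\gamma}$, both from $W^{1,p}\hookrightarrow C^{\gamma}$, exactly as the paper does for $n\ge3$.
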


\begin{rem}
Proposition \ref{cmx>2} can be easily obtained through Lemma \ref{CGO} and similar to the proof of Theorem \ref{xxfwtdl} regarding $n\geq3$. 
Proposition \ref{cmx2} can be obtained by the proof process of Theorem 1 in \cite{2D2015} and using Runge type approximation similar to Lemma \ref{Runge}.
\end{rem}

Next, we use higher order linearization methods to prove Theorem \ref{dl1} and Theorem \ref{dl2}. Since the proof of Theorem \ref{dl1} is similar to that in \cite{llls2020}, we have present it in the appendix.
For Theorem \ref{dl2} ($n=2$ case), it can be similarly proved by using Theorem \ref{xxfwtdl} and Propsition \ref{cmx2}.

\section{Numerical reconstruction}\label{sec5}
In this section, we will discuss the reconstruction algorithm for multi-coefficients in the semilinear Helmholtz equation and give some numerical examples.

\subsection{Reconstruction algorithm}
We consider $\Omega$ to be the rectangular region $[0,1] \times [0,1]$, as shown in Figure~\ref{fig:demo5.1}. The domain $\Omega$ is uniformly partitioned with step size $h = 1/N$, so that $x_n = (n-1)h$, $n=1, \ldots, N+1$, and $y_m = (m-1)h$, $m=1, \ldots, N+1$; see Figure~\ref{fig:demo5.2}.
\begin{figure}[H]
	\centering
	\begin{minipage}[t]{0.35\textwidth}
		\centering
		\includegraphics[width=\linewidth]{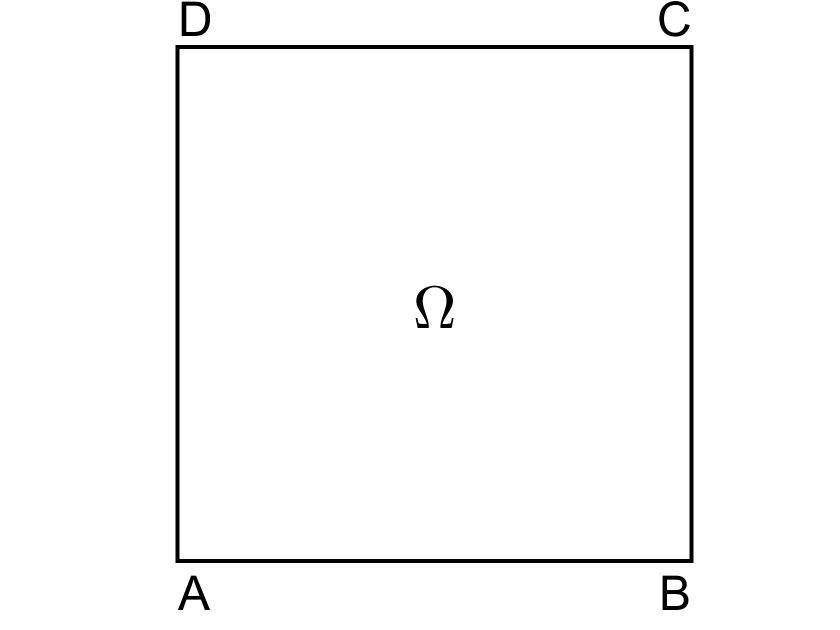}
		\caption{region ABCD}
		\label{fig:demo5.1}
	\end{minipage}
	\begin{minipage}[t]{0.35\textwidth}
		\centering
		\includegraphics[width=\linewidth]{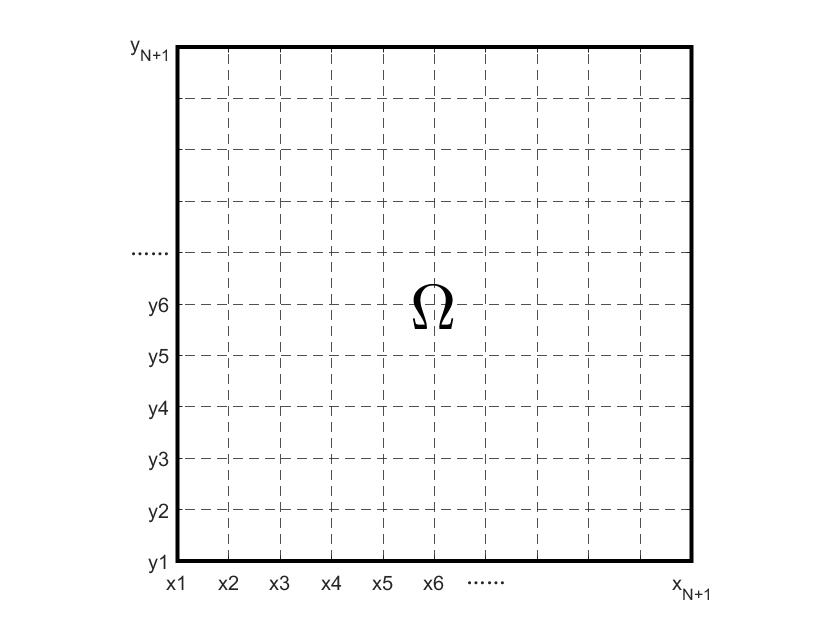}
		\caption{Divided ABCD}
		\label{fig:demo5.2}
	\end{minipage}
\end{figure}
Next, we will use the following five-point finite difference scheme within $\Omega$.
\begin{equation}\label{lsgs1}
	\dfrac{(u_{n-1,m}+u_{n,m+1}+u_{n+1,m}+u_{n,m-1}-4u_{n,m})}{h^2}+k^2(1+\alpha_{n,m})u_{n,m}+k^2\beta_{n,m}u^3_{n,m}=0,
\end{equation}
where $n=2,3,\text{…}, N, m=2,3,\text{…}, N.$ And we separately adopt the following discretization scheme for the boundaries AB, BC, CD, DA.
\begin{align}
	(\dfrac{u_{n+1,1}-u_{n,1}}{h}, \dfrac{u_{n,2}-u_{n,1}}{h})\cdot(0,-1)=g_{n,1} \quad\quad\quad\hspace{2mm} on \hspace{1mm}AB,
	\\[2mm]
	(\dfrac{u_{N+1,m}-u_{N,m}}{h}, \dfrac{u_{N+1,m+1}-u_{N+1,m}}{h})\cdot(1,0)=g_{N+1,m}\quad\quad\hspace{0.1mm} on \hspace{1mm}BC,
	\\[2mm]
	(\dfrac{u_{n+1,N+1}-u_{n,N+1}}{h}, \dfrac{u_{n,N+1}-u_{n,N}}{h})\cdot(0,1)=g_{n,N+1}\quad\quad\hspace{0.6mm} on \hspace{1mm}CD,
	\\[2mm]
	(\dfrac{u_{2,m}-u_{1,m}}{h}, \dfrac{u_{1,m+1}-u_{1,m}}{h})\cdot(-1,0)=g_{1,m}\quad\quad\quad\hspace{1mm}  on \hspace{1mm}DA.
\end{align}
Note that for the four vertices\vspace{2mm} A, B, C, D,  each vertex satisfies discretization scheme with two boundaries. We combine the weights of $\dfrac{1}{2}$ with\vspace{2mm} the discrete format of two boundaries to obtain the discretization scheme of the following vertices.
\begin{align}
	u_{11}=\dfrac{1}{2}(u_{1,2}+u_{2,1}+2g_{1,1}h) \quad\quad\quad\hspace{2.05cm} at \hspace{1mm}A,   \\[2mm]
	u_{N+1,1}=\dfrac{1}{2}(u_{N,1}+u_{N+1,2}+2g_{N+1,1}h)
	\quad\quad\hspace{1.25cm} at \hspace{1mm}B,
	\\[2mm]
	u_{N+1,N+1}=\dfrac{1}{2}(u_{N,N+1}+u_{N+1,N}+2g_{N+1,N+1}h)
	\quad\quad\hspace{0.65mm} at \hspace{1mm}C,
	\\[2mm]
	u_{1,N+1}=\dfrac{1}{2}(u_{1,N}+u_{2,N+1}+2g_{1,N+1}h)
	\quad\quad\quad\hspace{0.785cm}  at \hspace{1mm}D.
\end{align}


After providing the discretization scheme, we select different $m$, $n$ from (\ref{lsgs1}) yield different equations, which can be combined into a vector $F$, as shown in (\ref{F})-(\ref{Fi}). Finally, we use the quasi Newton method for iteration to calculate the numerical solution of the forward problem, as shown in Table \ref{table:demo20}.
\vspace{2mm}
\begin{equation}\label{F}
	F=(F_{1}, F_{2}, F_{3}, \text{…}, F_{(N-1)\cdot(N-1)})_{1\times(N-1)\cdot(N-1)},
\end{equation}
\vspace{3mm}
\begin{equation}\label{Fi}
	F_{i}=\dfrac{(u_{n-1,m}+u_{n,m+1}+u_{n+1,m}+u_{n,m-1}-4u_{n,m})}{h^2}+k^2(1+\alpha_{n,m})u_{n,m}+k^2\beta_{n,m}u^3_{n,m},
\end{equation}
where $i=(n-2)(N-1)+(m-1),\hspace{1mm} n=2,3,\text{…}, N,\hspace{1mm} m=2,3,\text{…}, N.$
\begin{table}[H]
	\centering
	\caption{Quasi Newton method}
	\label{table:demo20}
	\begin{tabular}{llllllll}
		\toprule
		Algorithm 1    \\
		\midrule
		1: \quad Give $u^{(0)}$, $M$, termination error $\delta$, let $t=0$;   \\
    	2: \quad \textbf{for} $t=0$ \textbf{to} $M$ \textbf{do}  \\
		3: \quad Calculate the Jacobian matrix of F:  $JF(u^{(t)})$;\\
		4: \quad $u^{(t+1)}=u^{(t)}-F(u^{(t)})/JF(u^{(t)})$;\\
		5: \quad If $\lVert u^{(t+1)}-u^{(t)}\rVert_{2}/\lVert u^{(t)}\rVert_{2} <\delta$, end;\\
		6: \quad else if, $u^{(t+1)}=u^{(t)}$;\\
		7: \quad \textbf{end for}\\
		\bottomrule
	\end{tabular}
\end{table}
After solving the numerical solution of the forward problem, we obtain its boundary measurement data $u|_{\partial\Omega}$, and using Bayesian method \cite{Dashti2015} to solve the inverse problem by boundary data.

Let $\mathbf{X}$ be a separable Hilbert space equipped with the Borel $\sigma$-algebra, and let $\mathcal{G} : \mathbf{X} \to \mathbb{R}^n$ be a measurable function, referred to as the forward operator, representing the relationship between the model parameters and the data, as in \eqref{problem}. The inverse problem consists of determining the unknown parameter $c \in \mathbf{X}$ from measurement data $y \in \mathbb{R}^n$, which is typically given by
\begin{align}\label{ForMod}
y = \mathcal{G}(c) + \eta,
\end{align}
where the noise $\eta$ is assumed to be a zero-mean Gaussian random variable with covariance matrix $\Gamma$.
In the Bayesian framework, both the model parameter and the measurement data are treated as random variables. From \eqref{ForMod}, the negative log-likelihood is given by
$$
\Phi(c):=\frac{1}{2}|\Gamma^{-\frac{1}{2}}(\mathcal{G}(c)-y)|^2.
$$
Combining the prior measure $\mu_0$ with density $\pi_0$ and Bayes' theorem, the posterior density, up to a normalizing constant, is
\begin{align}\label{PosDen}
\pi(c) = \exp\bigl(-\Phi(c)\bigr)\pi_0(c).
\end{align}
To characterize the posterior, we employ the pCN algorithm \cite{Dashti2015} for sampling, as summarized in Table \ref{table:demo19}.

\begin{figure}[htbp]
 \centering
 \includegraphics[width=0.9\textwidth,height=0.45\textwidth]{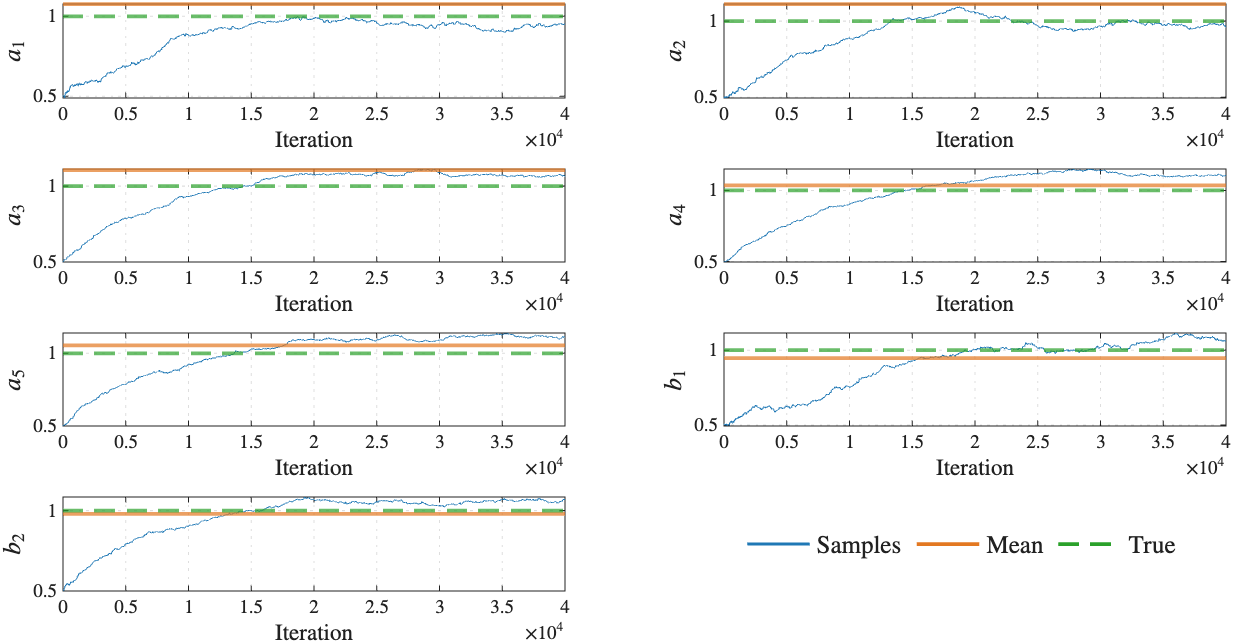}
\caption{Sample traces of all coefficients in Example \hyperref[exa1]{1}.
}
\label{fig:Traceploy}
\end{figure}


\begin{table}[H]
	\centering
	\caption{pCN method}
	\label{table:demo19}
	\begin{tabular}{llllllll}
		\toprule
		Algorithm 2   \\
		\midrule
		1: \quad Give $c^{(0)}$, $\gamma$, $M$, let $t=0$;   \\
		2: \quad \textbf{for} $t=0$ \textbf{to} $M$ \textbf{do}  \\
		3: \quad
		Extract sample $s^{(t)}=\sqrt{1-\gamma^2}c^{(t)}+\gamma \xi^{(t)}$, where $\xi^{(t)}\sim \mathcal{N}(0,C_{0})$;\\
    	4: \quad
    	Calculate the acceptance probability $\rho(c^{(t)},s^{(t)})=\min\{1,\exp(\Phi(c^{(t)})-\Phi(s^{(t)}))\}$;\\
		5: \quad Generate a uniform random number $\theta\sim U[0,1]$;\\
		6: \quad If $\theta\leq \rho(c^{(t)},s^{(t)})$, then take $c^{(t+1)}=s^{(t)}$;\\
		7: \quad else if, $c^{(t+1)}=c^{(t)}$;\\
		8: \quad \textbf{end for}\\
		\bottomrule
	\end{tabular}
\end{table}

\begin{figure}[htbp]
 \centering
 \includegraphics[width=1\textwidth,height=0.5\textwidth]{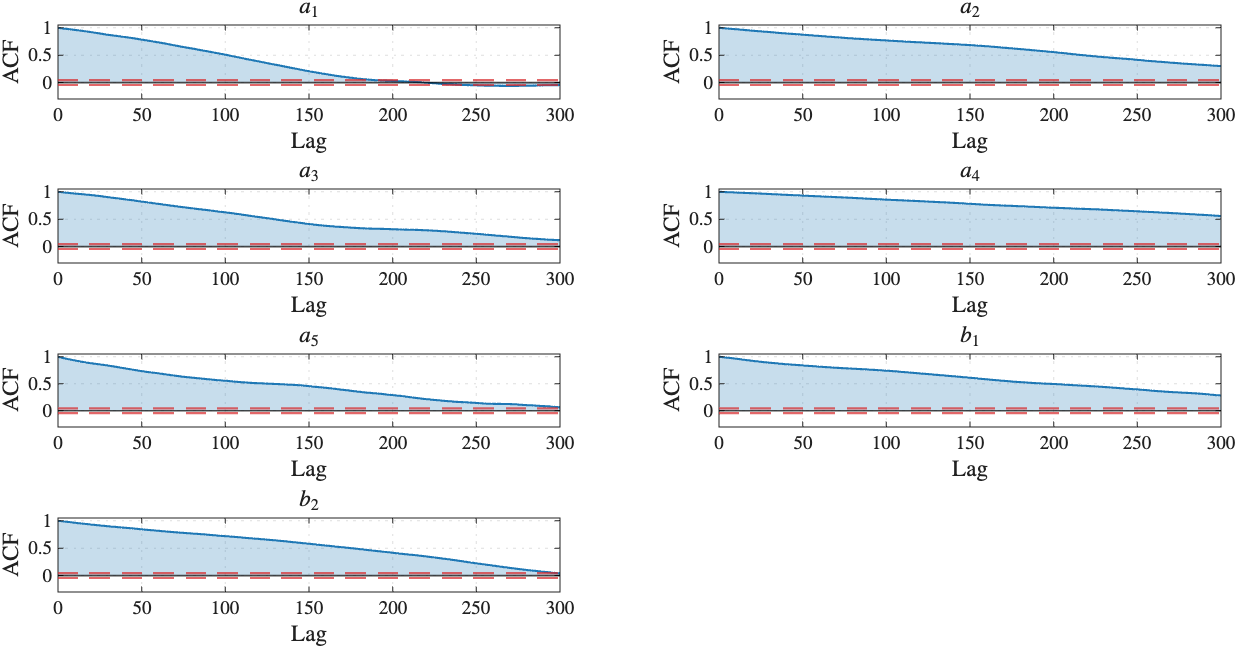}
\caption{Sample autocorrelation functions of all coefficients in Example \hyperref[exa1]{1}.
}
\label{fig:ACFploy}
\end{figure}

\begin{figure}[htbp]
  \centering
  \subfloat[True, $\alpha$]
  {
 \includegraphics[width=0.27\textwidth,height=0.22\textwidth]{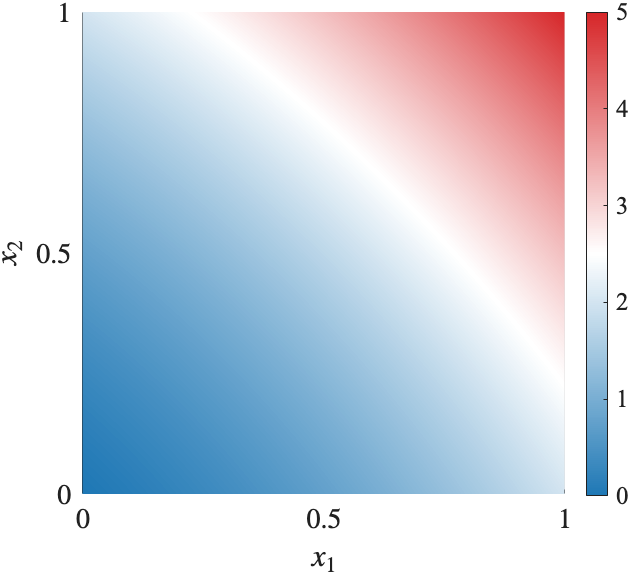}
  }
\subfloat[Reconstruction, $\alpha$]
  {
    \includegraphics[width=0.27\textwidth,height=0.22\textwidth]{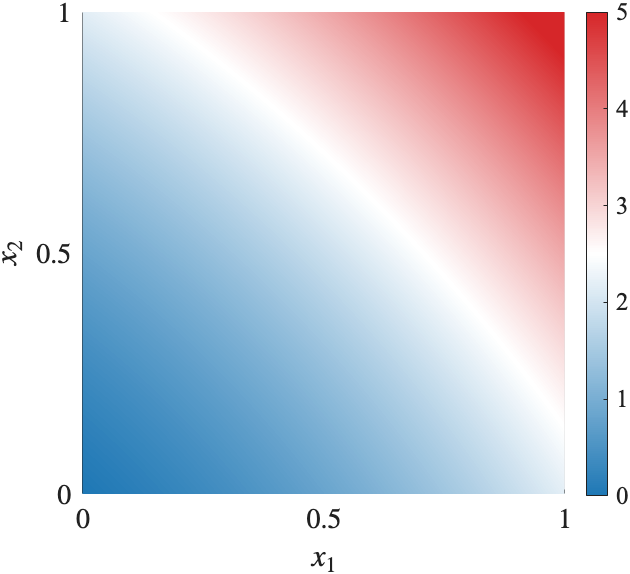}
  }
   \subfloat[$\sigma$, $\alpha$]
  {
  \includegraphics[width=0.28\textwidth,height=0.23\textwidth]{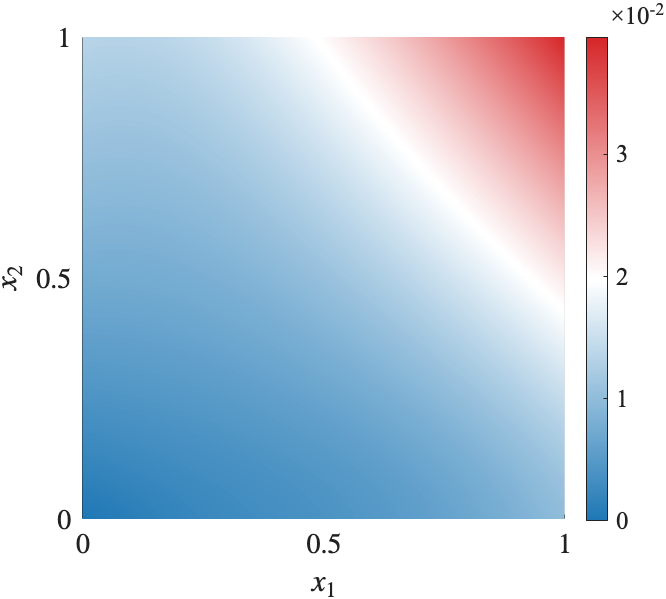}
  }\\
    \subfloat[True, $\beta$]
  {
 \includegraphics[width=0.27\textwidth,height=0.22\textwidth]{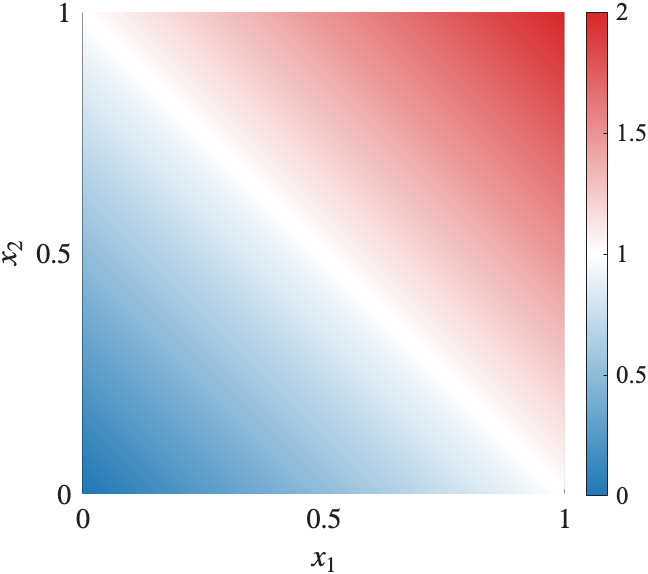}
  }
\subfloat[Reconstruction, $\beta$]
  {
    \includegraphics[width=0.27\textwidth,height=0.22\textwidth]{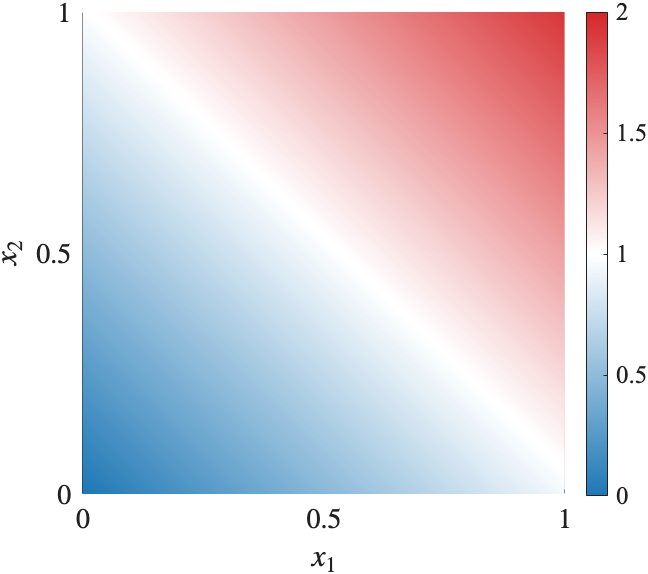}
  }
   \subfloat[$\sigma$, $\beta$]
  {
  \includegraphics[width=0.28\textwidth,height=0.23\textwidth]{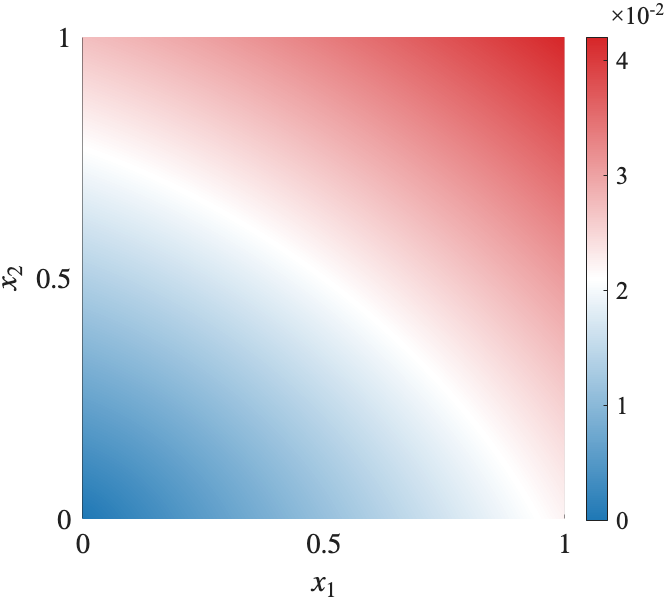}
  }
  
\caption{Sample mean and standard deviation ($\sigma$) for reconstructing $\alpha$ and $\beta$ in Example \hyperref[exa1]{1}}
    \label{fig:ex1}
\end{figure}

\subsection{Numerical examples}
In following numerical examples, we use the finite difference method with  Quasi Newton iteration, i.e., Algorithm \ref{table:demo20} to solve the forward problem, and then reconstruct the $\alpha$ and $\beta$ by Bayesian method.
\subsubsection{Example 1}\label{exa1}
Let $k=1, N=40$, and $ g(x)=x_{1}-x_{2}$, where $x=(x_1,x_2)$.
The basis for space $L^2(\Omega)$ is truncated as
$$
\{x_1,x_2,x_1^2,x_1x_2,x_2^2,\dots,x_1^{\mathbb{K}},x_1^{\mathbb{K}-1}x_2,\dots,x_2^{\mathbb{K}}\},
$$
where $\mathbb{K}$ is a positive integer.
For computational simplicity, in this example we reconstruct the $(\alpha,\beta)$ in this polynomial basis.
The exact $\alpha$ and $\beta$ are set
$$
\alpha(x)=a_1x_{1}+a_2x_{2}+a_3x_{1}^2+a_4x_{1}x_{2}+a_5x_{2}^2,\quad \beta(x)=b_1x_{1}+b_2x_{2},
$$
with $a_i=b_j=1$ for $i=1,\dots,5,j=1,2$, respectively.
Thus, we reconstruct the coefficients $\theta=[a_1,a_2,\dots, a_5,b_1,b_2]$.
We set a Gaussian prior $\mu_0$ with mean $0.5\times\text{ones}(7,1)$ and covariance matrix $\text{diag}([0.045,0.045,0.03,0.03,0.035,0.05,0.035]^2)$.
Measurement data are generated via $u=\mathcal{G}(\alpha,\beta)+\eta$ where $\mathcal{G}$ is the forward model and $\eta$ represents the Gaussian noise with a standard deviation taken by $0.1\%$ (i.e., noise level) of the maximum norm of the model output $u|_{\partial\Omega}$.
Notice that, in order to avoid `inverse crimes', we compute the data by on a finer grid than used in the inversion.

\begin{figure}[htbp]
 \centering
 \includegraphics[width=0.8\textwidth,height=0.4\textwidth]{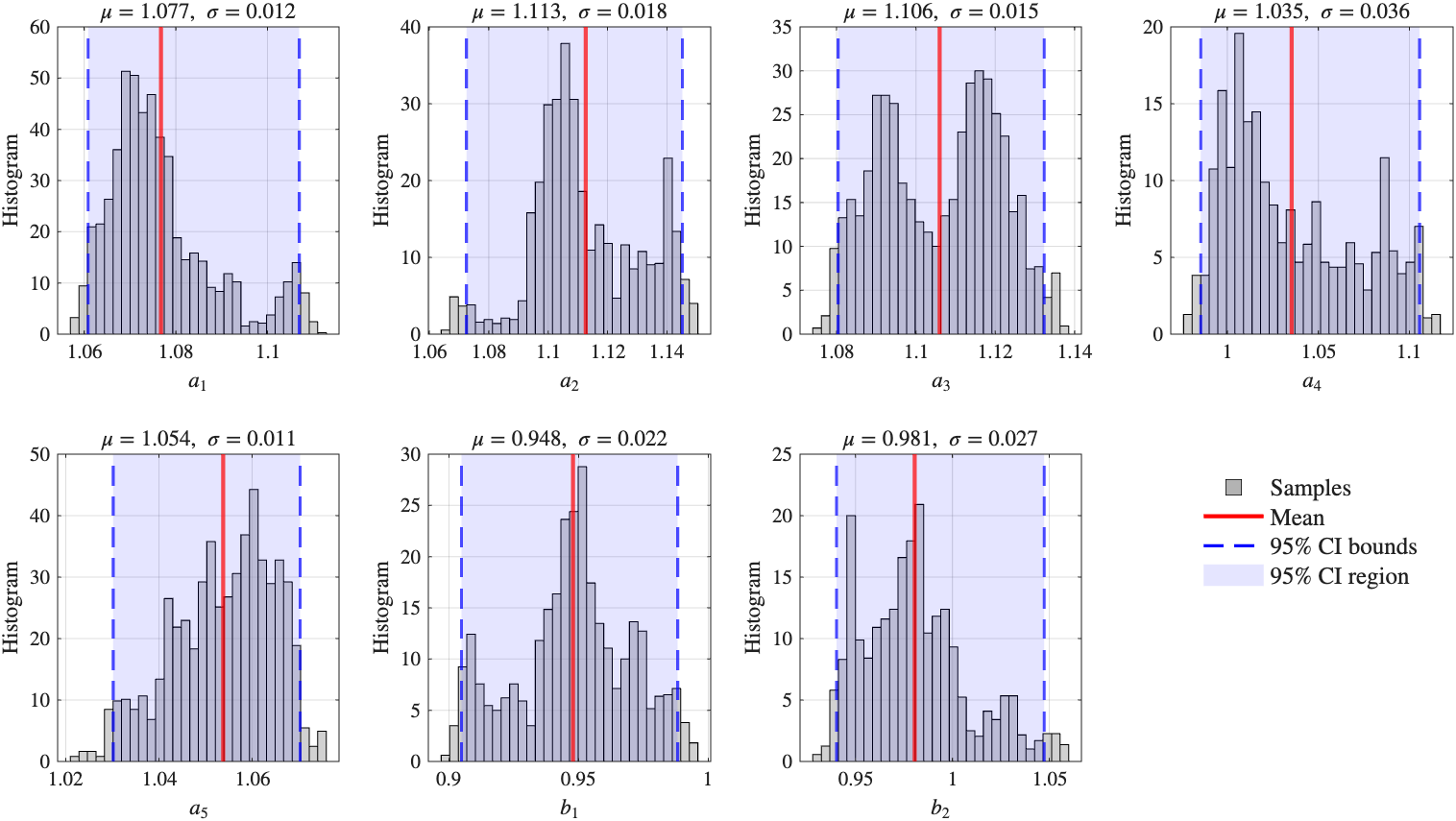}
\caption{Samples histogram in Example \hyperref[exa1]{1} and $\mu$ represents the samples mean.
}
\label{fig:Histploy}
\end{figure}

We run the pCN algorithm \ref{table:demo19}, generating $4 \times 10^4$ samples and discarding the first $2 \times 10^4$ as burn-in.
Figure \ref{fig:Traceploy} shows the trace plots of all coefficients.
All chains appear to stabilize around their true values after the burn-in period, with no visible trends or drift, indicating convergence.
Figure \ref{fig:ACFploy} displays the autocorrelation functions (ACF) up to lag $300$; all components show a monotone decay toward zero, and correlations become negligible within a few hundred lags, confirming satisfactory mixing and a reasonable effective sample size. 
Figure \ref{fig:ex1} displays the sample mean and standard deviation ($\sigma$) of the reconstructed $\alpha$ and $\beta$.
The true are shown in Figure \ref{fig:ex1}a and \ref{fig:ex1}d.
It can be observed that the $\alpha$ and the $\beta$ are very close to the true, and both achieve a small sample standard deviation.
The marginal posterior distributions in Figure \ref{fig:Histploy} are multimodal; their means are close to the exact value $1$, and the true coefficients for $a_4$ and $b_2$ lie within the empirical 95\% credible intervals. 
The non-Gaussian shape suggests that the posterior is not well-approximated by a Gaussian distribution, a common feature in nonlinear inverse problems with Gaussian priors.
The correlation matrix in Figure \ref{fig:Corrploy} reveals that the cross-correlations between the 
$\alpha$ and $\beta$ coefficients are generally weak (most of the absolute values are less than $0.3$), indicating that these two sets of coefficients are approximately independent in the posterior distribution given the observational data. In a Bayesian sense, this suggests that  $\alpha$ and $\beta$ are decoupled. Meanwhile, the strong correlations observed within each coefficients set--such as the negative correlations between $a_1$ and $a_3$ and between $a_2$ and $a_5$--reflect trade-offs in their contributions to the forward model. 



\begin{figure}[htbp]
 \centering
 \includegraphics[width=0.5\textwidth,height=0.4\textwidth]{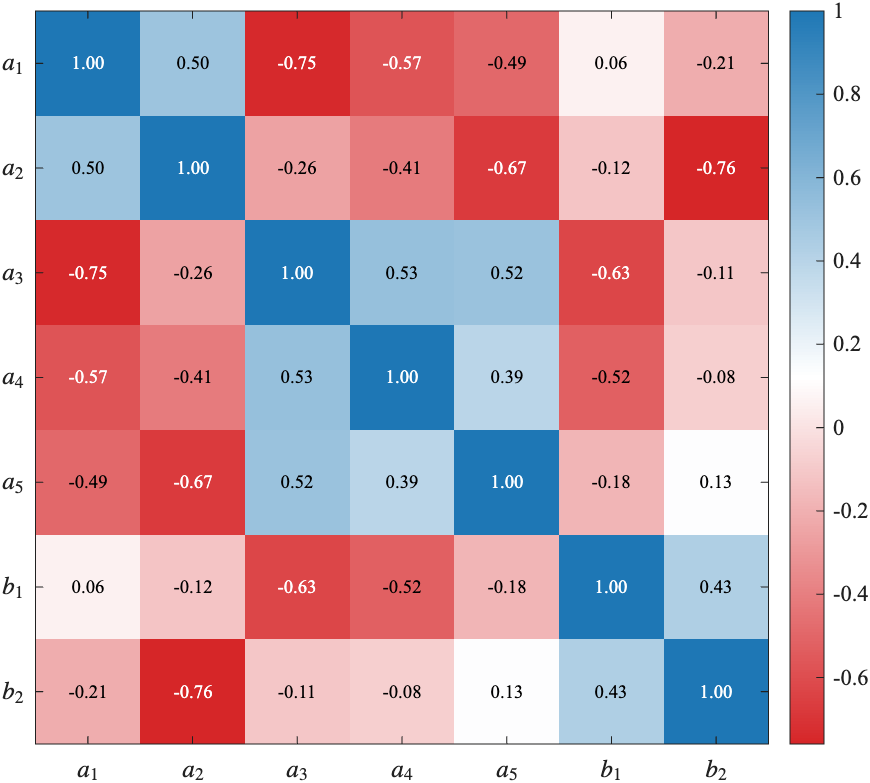}
\caption{Correlation coefficients matrix in Example \hyperref[exa1]{1}.
}
\label{fig:Corrploy}
\end{figure}

\begin{figure}[htbp]
 \centering
 \includegraphics[width=0.9\textwidth,height=0.45\textwidth]{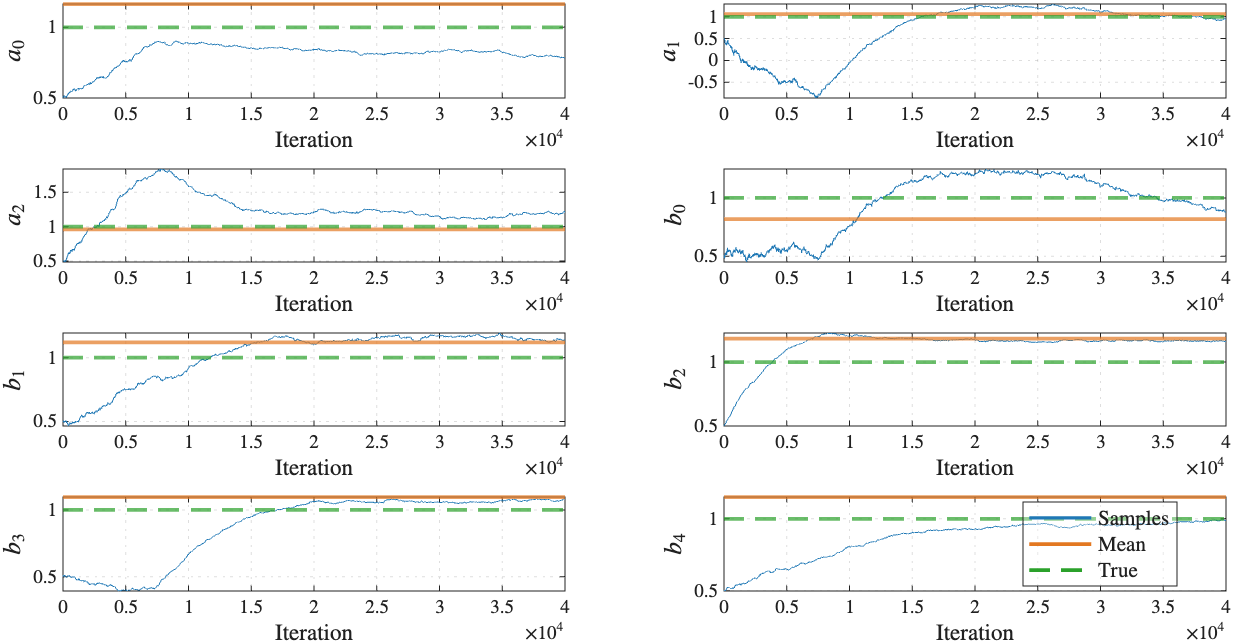}
\caption{Sample traces of all coefficients in Example \hyperref[exa2]{2}.
}
\label{fig:TraceTri}
\end{figure}

\begin{figure}[htbp]
 \centering
 \includegraphics[width=1\textwidth,height=0.5\textwidth]{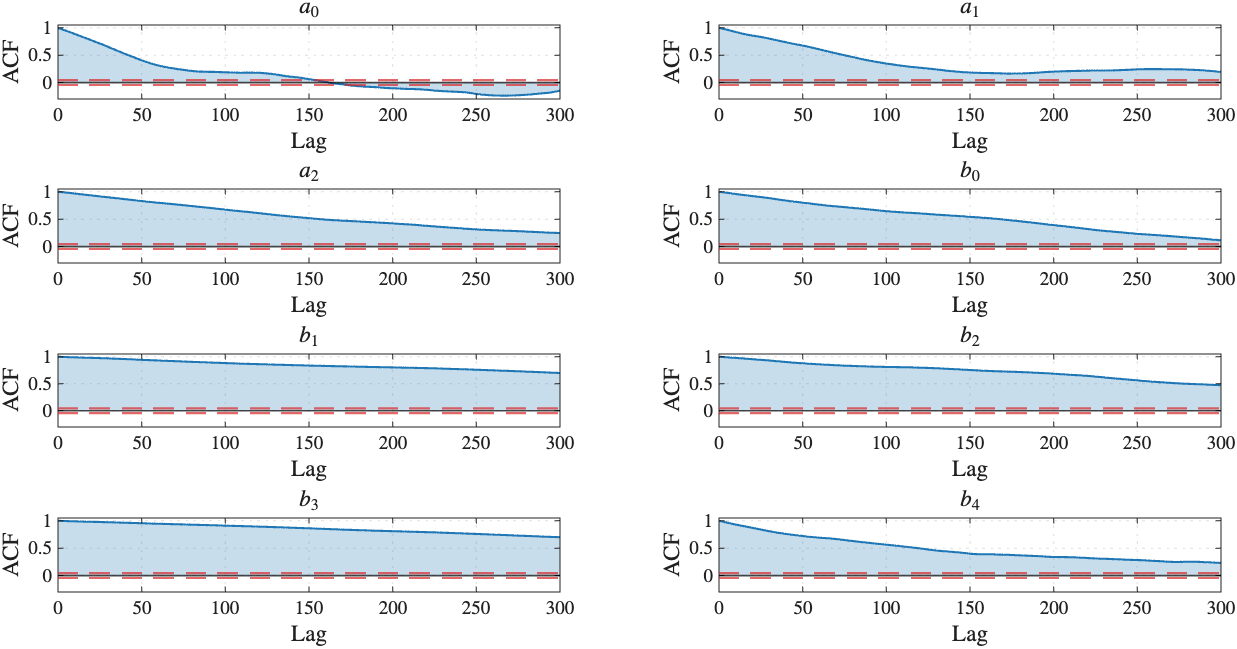}
\caption{Sample autocorrelation functions of all coefficients in Example \hyperref[exa2]{2}.
}
\label{fig:ACFTri}
\end{figure}

\subsubsection{Example 2}\label{exa2}
Let $x=(x_1,x_2)$. 
The basis functions are truncated as
\begin{align*}
&\bigl\{cos(\pi nx_1)cos(\pi mx_2)+cos(\pi nx_1)sin(\pi mx_2)
+sin(\pi nx_1)cos(\pi mx_2)\\
&+sin(\pi nx_1)sin(\pi mx_2)\bigr\}_{n,m=0}^{\mathbb{N},\mathbb{M}}
\end{align*}
where $\mathbb{N},\mathbb{M}$ are integers.
For computational simplicity, we reconstruct the coefficients $(\alpha, \beta)$ using this trigonometric basis. 
The exact forms of $\alpha$ and $\beta$ are given by
\begin{align*}
\alpha(x)&=a_0+a_1cos(\pi x_{2})+a_2sin(\pi x_{2}) \\
\beta(x)&=b_0+b_1cos(\pi x_{1})+b_2sin(\pi x_{1})+b_3cos(\pi x_2)+b_4sin(\pi x_2),
\end{align*}
with $a_i=b_j=1$ for $j=0,\dots,4,i=0,1,2$, respectively.
Thus, we reconstruct the coefficients $\theta=[a_0,a_1,a_2,b_0,\dots,b_4]$.
The Gaussian prior $\mu_0$ is chosen with mean $0.5\times\text{ones}(8,1)$ and covariance matrix $\operatorname{diag}([0.04, 0.28, 0.1, 0.12, 0.06, 0.03, 0.035, 0.03]^2)$. Measurement data are generated by $u = \mathcal{G}(\alpha, \beta) + \eta$, where $\mathcal{G}$ denotes the forward model and $\eta$ is Gaussian noise with standard deviation $0.1\%$ of the maximum norm of $u|_{\partial \Omega}$.
To avoid the so-called `inverse crimes', the measurement data are produced by solving the forward problem on a finer grid.

\begin{figure}[htbp]
  \centering
  \subfloat[True, $\alpha$]
  {
 \includegraphics[width=0.27\textwidth,height=0.22\textwidth]{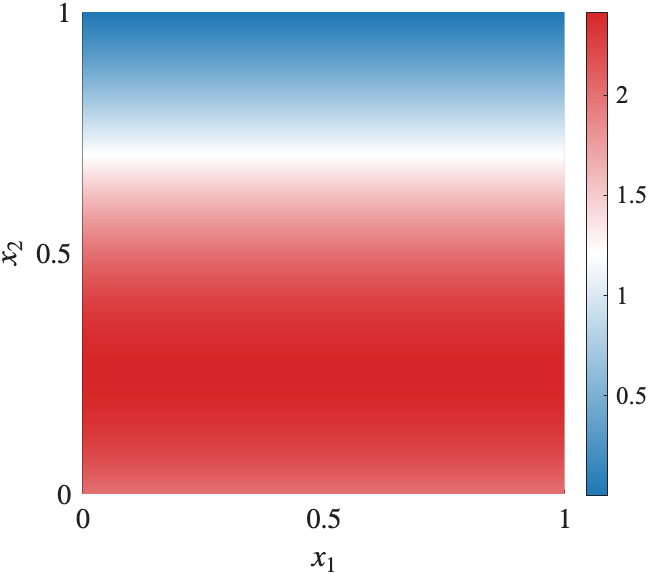}
  }
\subfloat[Reconstruction, $\alpha$]
  {
    \includegraphics[width=0.27\textwidth,height=0.22\textwidth]{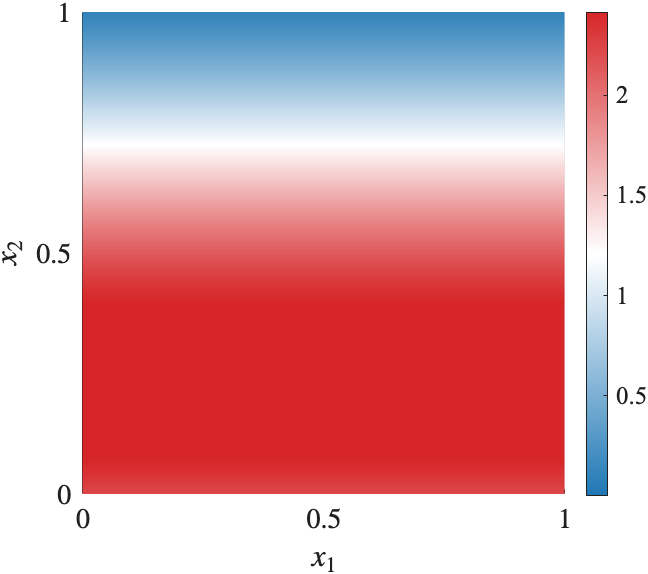}
  }
   \subfloat[$\sigma$, $\alpha$]
  {
  \includegraphics[width=0.28\textwidth,height=0.23\textwidth]{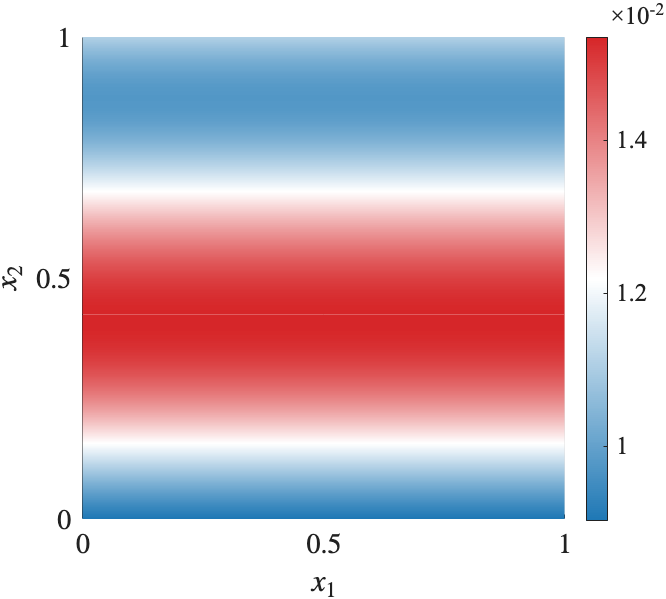}
  }\\
    \subfloat[True, $\beta$]
  {
 \includegraphics[width=0.27\textwidth,height=0.22\textwidth]{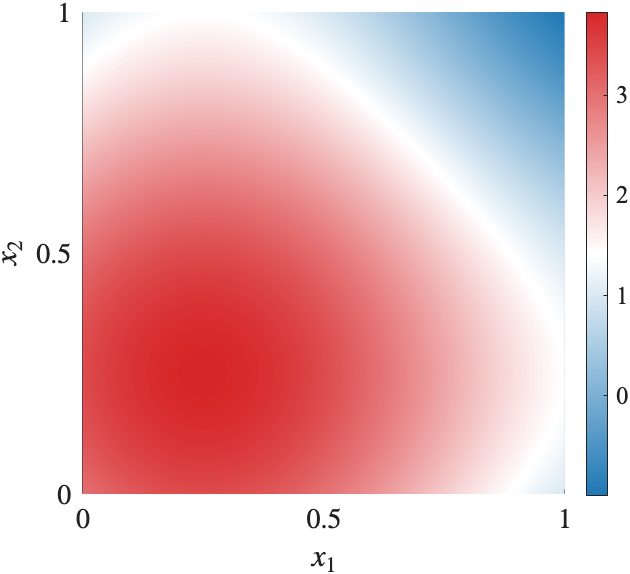}
  }
\subfloat[Reconstruction, $\beta$]
  {
    \includegraphics[width=0.27\textwidth,height=0.22\textwidth]{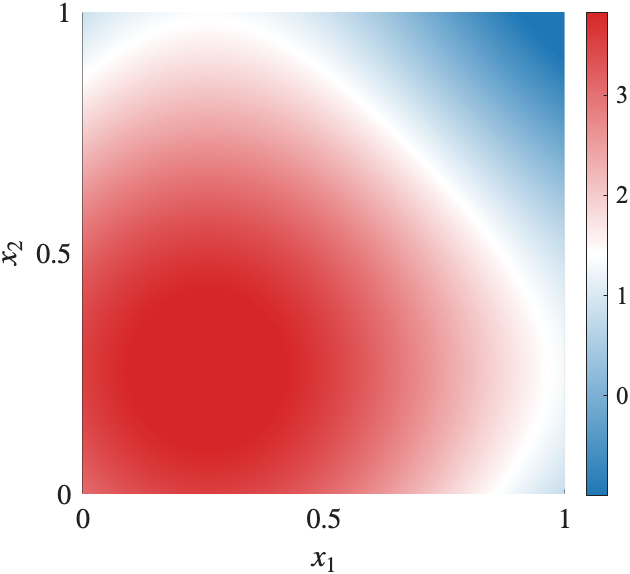}
  }
   \subfloat[$\sigma$, $\beta$]
  {
  \includegraphics[width=0.28\textwidth,height=0.23\textwidth]{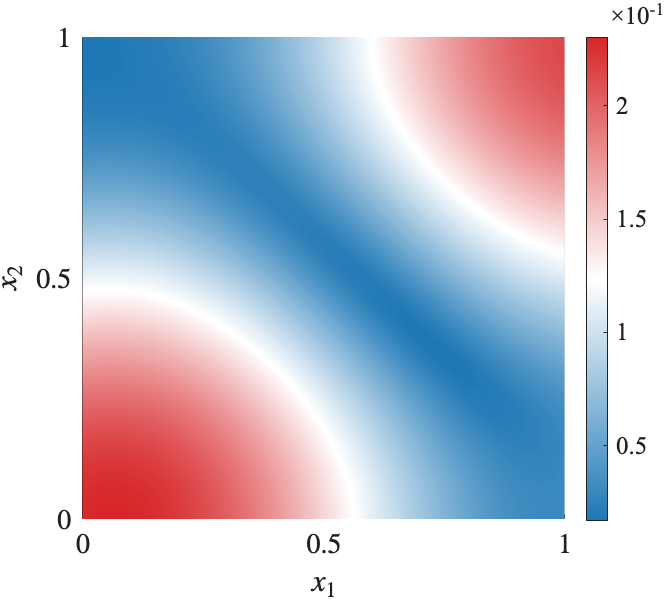}
  }

\caption{Sample mean and standard deviation ($\sigma$) for reconstructing $\alpha$ and $\beta$ in Example \hyperref[exa2]{2}}
    \label{fig:ex2}
\end{figure}

We run the pCN algorithm \ref{table:demo19} to generate $4\times10^4$ samples, discarding the first $2\times10^4$ iterations as burn-in. Figure~\ref{fig:TraceTri} presents the trace plots of all coefficients. After burn-in, each chain fluctuates stably around its true value without visible trends or drift, indicating convergence to stationarity. The autocorrelation functions in Figure~\ref{fig:ACFTri}, computed up to lag $300$, exhibit a monotone decay toward zero; correlations become negligible within a few hundred lags, suggesting satisfactory mixing and a reasonable effective sample size.
The sample means and standard deviations of the reconstructed $(\alpha,\beta)$ are shown in Figure~\ref{fig:ex2}, where the true functions are shown in Figure~\ref{fig:ex2}a and \ref{fig:ex2}d. The reconstructions closely match the ground truth, with small posterior standard deviations, demonstrating accurate recovery and limited uncertainty.
Figure~\ref{fig:HistTri} displays the marginal posterior distributions of all coefficients, together with posterior means and $95\%$ credible intervals. Several marginals are approximately symmetric and mildly multimodal. The posterior means are close to the true values $a_i=b_j=1$, confirming reliable identification of the ground-truth coefficients, although the posterior uncertainties vary substantially across parameters.
Finally, Figure~\ref{fig:CorrTri} shows the posterior correlation matrix of $\theta=[a_0,a_1,a_2,b_0,\dots,b_4]$. Pronounced cross-correlations between the $\alpha$- and $\beta$-coefficients are observed, with significant negative correlations (notably between $a_2$ and $b_1,b_3$). This indicates posterior coupling between $\alpha$ and $\beta$, reflecting the propagation of uncertainty across the two parameter sets through the forward model.

\begin{figure}[htbp]
 \centering
 \includegraphics[width=0.8\textwidth,height=0.4\textwidth]{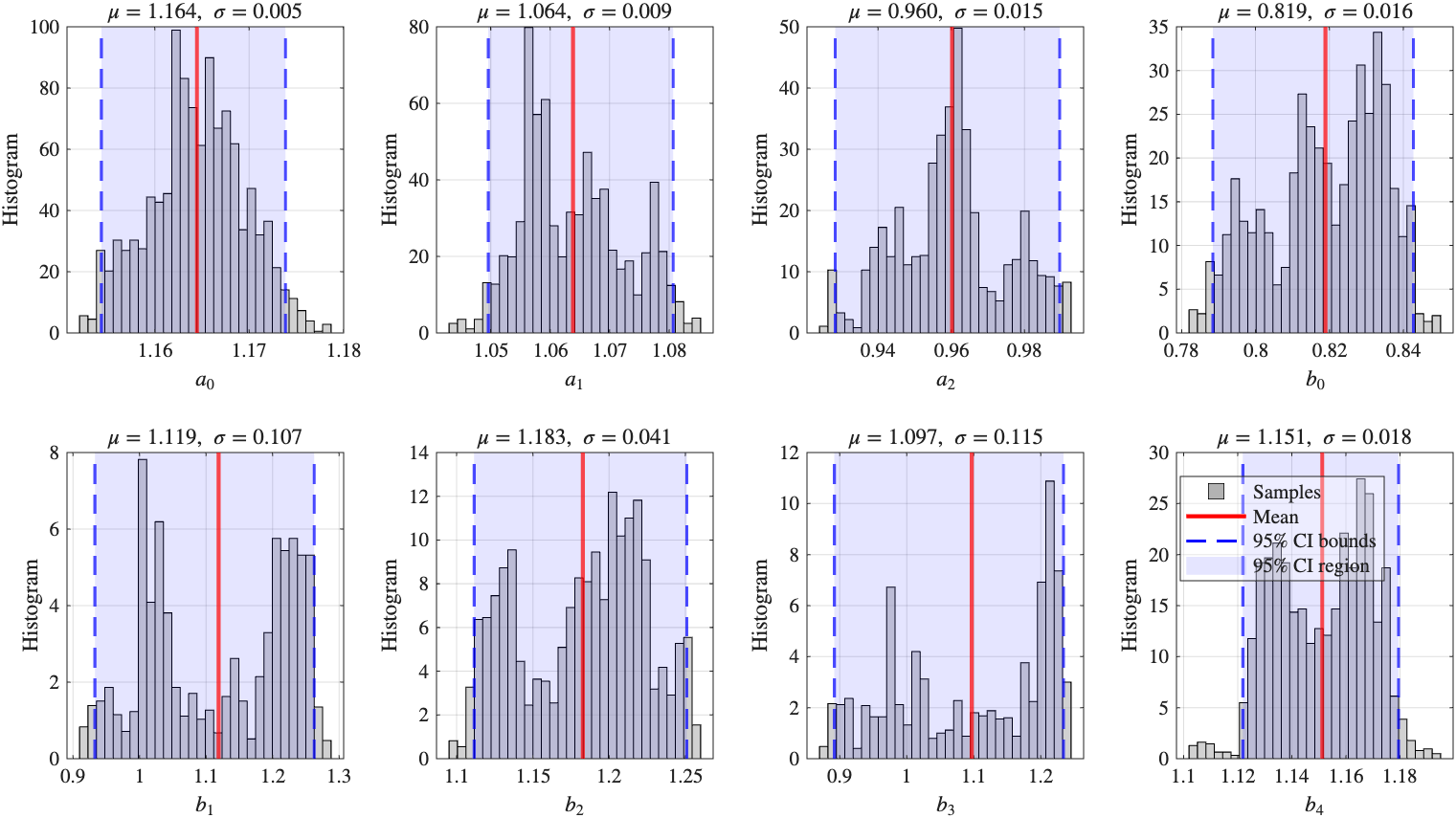}
\caption{Samples histogram in Example \hyperref[exa2]{2} and $\mu$ represents the samples mean.
}
\label{fig:HistTri}
\end{figure}

\section{Conclusion}\label{sec6}
In this work, we have established the unique recovery of both the linear and nonlinear coefficients $\alpha(x)$ and $\beta(x)$ in the semilinear Helmholtz equation from the Neumann-to-Dirichlet map. The analysis is carried out in two parts: for dimensions $n \geq 3$ under H\"older continuity assumptions, and for $n = 2$ under Sobolev regularity conditions. The proof strategy combines the higher-order linearization method with techniques from linear inverse problems, including the construction of CGO solutions and Runge-type approximation arguments.

The well-posedness of the forward problem is first established via the implicit function theorem, which allows us to define the NtD map and proceed with the linearization approach. The uniqueness results for the linearized problem are then extended to the fully nonlinear case through the density argument and higher-order linearization, demonstrating that the boundary data uniquely determine the interior coefficients.

In addition to the theoretical analysis, we develop a numerical reconstruction framework for recovering the coefficients $\alpha(x)$ and $\beta(x)$ from boundary data. 
The forward problem is discretized by a finite difference scheme combined with a quasi-Newton iteration, while the inverse problem is formulated within a Bayesian inference framework and solved via posterior sampling. 
The numerical experiments demonstrate that the proposed reconstruction method can effectively recover both coefficients and provide uncertainty quantification for the reconstruction results.

Future work may include extending these results to more general nonlinearities, considering partial data settings, or addressing the case of less regular coefficients. The methods developed here may also be applicable to other types of nonlinear wave equations and coupled systems.

\begin{figure}[htbp]
 \centering
 \includegraphics[width=0.5\textwidth,height=0.4\textwidth]{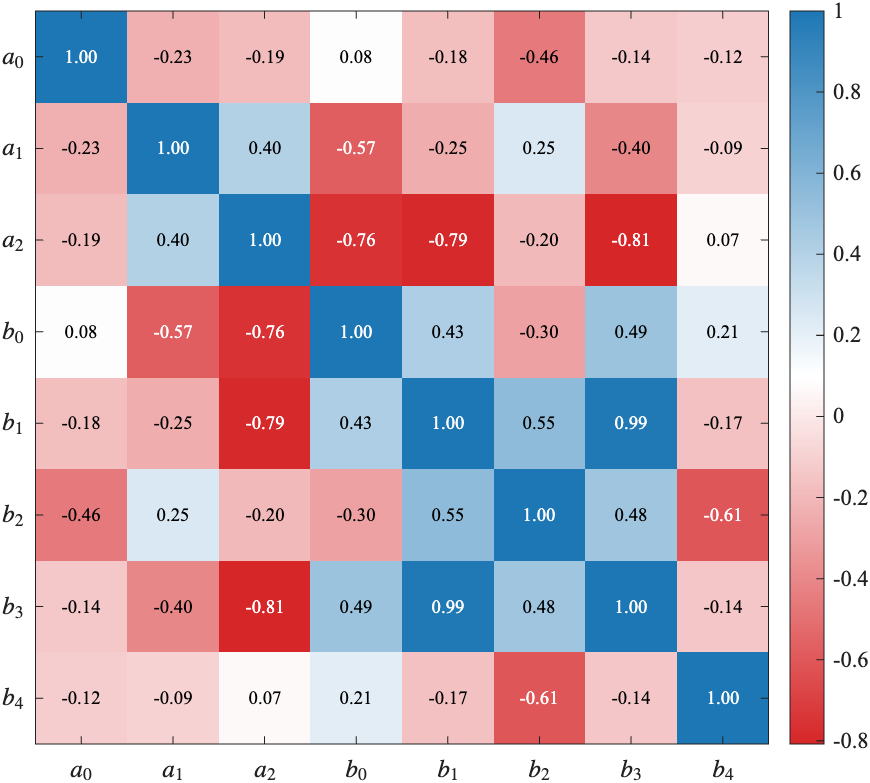}
\caption{Correlation coefficients matrix in Example \hyperref[exa2]{2}.
}
\label{fig:CorrTri}
\end{figure}

\section*{Acknowledgments}
\addcontentsline{toc}{section}{Acknowledgments}
The work described in this paper was supported by the NSF of China (12271151), the NSF of Hunan (2020JJ4166), and the Postgraduate Scientific Research Innovation Project of Hunan Province (CX20240364).

\section*{Appendix}\label{sec7}
\textbf{The proof of Theorem \ref{dl1}:}
\vspace{2mm}Let $\overrightarrow{\varepsilon}=(\varepsilon_{1},\varepsilon_{2},\varepsilon_{3})\in \mathbb{R}^3$, $g=\sum_{k=1}^{3}\varepsilon_{k}g_{k}$, $g_{k}\in C^{1.\alpha}(\partial\Omega)$. As \vspace{1mm}for the well-posedness results Theorem \ref{sdx}, it can be seen that when $|\overrightarrow{\varepsilon}|\rightarrow 0$, $u(\cdot,\varepsilon)\rightarrow 0$ and $u(\cdot,\varepsilon)|_{\varepsilon=0}=0$.

Note $u_{j}=u_{j}(x,\varepsilon)$, $j=1,2$, is the solution of
\begin{equation}\label{gjxxh1}
	\begin{cases}
		\Delta u_{j}+k^2(1+\alpha_{j}(x))u_{j}+k^2\beta_{j}(x)u_{j}^3=0 &in \,\Omega,\vspace{1mm}\\
		\dfrac{ \partial u_{j} }{ \partial \nu }=\varepsilon_{1}g_{1}+\varepsilon_{2}g_{2}+\varepsilon_{3}g_{3} &on \,\partial\Omega.
	\end{cases}
\end{equation} 	

Let $V_{j}^{(l)}=\partial_{\varepsilon_{l}}u_{j}|_{\varepsilon=0}$, $l=1,2,3$. Differentiating (\ref{gjxxh1}) with respect to $\varepsilon_{l}$, taking $\varepsilon=0$, and using $u_{j}(x,0)=0$, we get
\begin{equation}\label{xxfwtinnl}
	\begin{cases}
		\Delta V^{(l)}_{j}+k^2(1+\alpha_{j}(x))V^{(l)}_{j}=0 &in \,\Omega,\vspace{2mm}\\
		\dfrac{ \partial V^{(l)}_{j} }{ \partial \nu }=g_{l} &on \,\partial\Omega.
	\end{cases}
\end{equation} 	
Applying Theorem \ref{xxfwtdl} to (\ref{xxfwtinnl}), we have $\alpha_{1}(x)=\alpha_{2}(x)$ in $\Omega$. Therefore, we note $\alpha(x):=\alpha_{1}(x)=\alpha_{2}(x).$
And from Theorem \ref{sdx}, it can be seen that there exists a unique solution $V^{(l)}:=V_{1}^{(l)}=V_{2}^{(l)}.$

Next, we will discuss the uniqueness of determining $\beta(x)$.

Let $X_{j}=\partial^{3}_{\varepsilon_{1}\varepsilon_{2}\varepsilon_{3}}u_{j}|_{\varepsilon=0}$, differentiating (\ref{gjxxh1}) with respect to $\varepsilon_{1}$, $\varepsilon_{2}$, $\varepsilon_{3}$, taking $\varepsilon=0$, and using $u_{j}(x,0)=0$, we get

\begin{equation}
	\begin{cases}\label{V123}
		\Delta X_{j}+k^2(1+\alpha(x))X_{j}+6k^2\beta_{j}(x)V^{(1)}V^{(2)}V^{(3)}=0 &in \,\Omega,\vspace{2mm}\\
		\dfrac{ \partial X_{j} }{ \partial \nu }=0 &on \,\partial\Omega.
	\end{cases}
\end{equation} 	

Subtract equation (\ref{V123}) by taking $j=1$ and $j=2$ respectively, and then we have
\begin{equation}\label{j=12}
	\Delta X_{1}-\Delta X_{2}+k^2(1+\alpha(x))(X_{1}-X_{2})=-6k^2(\beta_{1}(x)-\beta_{2}(x))V^{(1)}V^{(2)}V^{(3)}.
\end{equation} 	
Multiplying both sides of (\ref{j=12}) by $V^{(0)}\in C^{2,\alpha}(\overline{\Omega})$ which satisfies
\begin{equation}
	\begin{cases}
		\Delta V^{(0)}+k^2(1+\alpha(x))V^{(0)}=0 &in \,\Omega,\vspace{2mm}\\
		\dfrac{ \partial V^{(0)}}{ \partial \nu }=h(x) &on \,\partial\Omega.
	\end{cases}
\end{equation} 	
where $h(x)\in C^{1,\alpha}(\partial\Omega)$. then we get
$$	(\Delta X_{1}-\Delta X_{2})V^{(0)}+(k^2(1+\alpha(x))V^{(0)})(X_{1}-X_{2})\vspace{1.5mm}$$
$$=\Delta (X_{1}-X_{2})V^{(0)}-(X_{1}-X_{2})\Delta V^{(0)}\vspace{2mm}$$
$$\hspace{3.7mm}=-6k^2(\beta_{1}(x)-\beta_{2}(x))V^{(1)}V^{(2)}V^{(3)}V^{(0)}. $$
And integrating in $\Omega$, there are
\begin{equation} \label{jfgreen}
\int_{{\Omega}} \Delta (X_{1}-X_{2})V^{(0)}-(X_{1}-X_{2})\Delta V^{(0)} dx=\int_{{\Omega}} -6k^2(\beta_{1}(x)-\beta_{2}(x))V^{(1)}V^{(2)}V^{(3)}V^{(0)} dx.
\end{equation} 	
Using Green's formula on the left side of (\ref{jfgreen}), we have
$$\begin{aligned}
\int_{{\Omega}} \Delta (X_{1}-X_{2})V^{(0)}-(X_{1}-X_{2})\Delta V^{(0)} dx
&=\int_{\partial\Omega} V^{(0)}\frac{\partial(X_{1}-X_{2})}{\partial\nu}-(X_{1}-X_{2})\frac{\partial V^{(0)}}{\partial\nu} dS\\
&=\int_{\partial\Omega} -(X_{1}-X_{2})\frac{\partial V^{(0)}}{\partial \nu} dS.
\end{aligned}$$

Since $N_{\alpha_{1},\beta_{1}}(g)=N_{\alpha_{2},\beta_{2}}(g)$, we have $u_{1}|_{\partial\Omega}=u_{2}|_{\partial\Omega}$. Then 
$$X_{1}=X_{2} \hspace{2mm} on \hspace{1mm} \partial\Omega. $$
Therefore,
\begin{equation}\label{V0123}
	\int_{{\Omega}} -6k^2(\beta_{1}(x)-\beta_{2}(x))V^{(1)}V^{(2)}V^{(3)}V^{(0)} dx=0,
\end{equation}
where $V^{(i)}\in C^{2,\alpha}(\overline{\Omega})$, $i=0,1,2,3$, satisfying $\Delta V^{(i)}+k^2(1+\alpha(x))V^{(i)}=0$ in $\Omega$.

Applying Proposition \ref{cmx>2} to (\ref{V0123}) and $k \neq 0$, we have
$$\beta_{1}(x)=\beta_{2}(x) \hspace{2mm} in \hspace{1mm}\Omega. $$
\hfill\textbf{$\square$}


\end{document}